\documentclass[12pt,reqno]{amsart}
\usepackage{amscd,amsfonts,amsmath,amssymb,amsthm}
\usepackage{dsfont,enumerate,graphicx,hyperref,pgfplots,perpage,url}

\usepackage[margin=2.2cm]{geometry}

\theoremstyle{plain}
\newtheorem{thm}{Theorem}[section]

\newtheorem{cor}[thm]{Corollary}
\newtheorem{defn}[thm]{Definition}

\newtheorem{prop}[thm]{Proposition}

\theoremstyle{remark}

\newtheorem*{rmk}{\textbf{Remark}}

\numberwithin{equation}{section}

\MakePerPage{footnote} \allowdisplaybreaks \raggedbottom

\newcommand\Ab{\mathbb A}
\newcommand\Ac{\mathcal A}
\newcommand\C{\mathbb{C}}
\newcommand\Cc{\mathcal{C}}
\newcommand\Card{\mathrm{Card}}
\newcommand\Char{\mathds 1}

\newcommand\Eb{\mathbb E}
\newcommand\Fc{\mathcal F}
\newcommand\Gb{\mathbb G}

\newcommand\Kc{\mathcal K}
\newcommand\Lip{\mathrm{Lip}}

\newcommand\N{\mathbb{N}}

\newcommand\R{\mathbb R}

\newcommand\ve{\varepsilon}

\newcommand\Var{\mathrm{Var}}
\newcommand\Vol{\mathrm{Vol}}

\newcommand\Z{\mathbb{Z}}

\title{Fractal uncertainty principle for random Cantor sets}

\author{Xiaolong Han}
\email{xiaolong.han@csun.edu}

\author{Pouria Salekani}
\email{pouria.salekani.474@my.csun.edu}
\address{Department of Mathematics, California State University, Northridge, CA 91330, USA}

\subjclass[2010]{42A61, 60F05, 60G57}

\keywords{Fractal uncertainty principle, random Cantor sets, Fourier decay, concentration of measure}

\thanks{} 

\begin{document}
\maketitle

\begin{abstract}
We continue our investigation of the fractal uncertainty principle (FUP) for random fractal sets. In the prequel \cite{EH}, we considered the Cantor sets in the discrete setting with alphabets randomly chosen from a base of digits so the dimension $\delta\in(0,\frac23)$. We proved that, with overwhelming probability, the FUP with an exponent $\ge\frac12-\frac34\delta-\ve$ holds for these discrete Cantor sets with random alphabets.

In this sequel, we construct random Cantor sets with dimension $\delta\in(0,\frac23)$ in $\R$ via a different random procedure from the one in \cite{EH}. We prove that, with overwhelming probability, the FUP with an exponent $\ge\frac12-\frac34\delta-\ve$ holds. The proof follows from establishing a Fourier decay estimate of the corresponding random Cantor measures, which is in turn based on a concentration of measure phenomenon in an appropriate probability space for the random Cantor sets.
\end{abstract}

\section{Introduction}\label{sec:intro}
We briefly recall the setup of the fractal uncertainty principle (FUP) for random fractal sets and refer to Eswarathasan-Han \cite{EH} for more background. Let $0<h\le1$ be the semiclassical parameter. Define $\Fc_h$ as the semiclassical Fourier transform
$$\Fc_hu(\xi)=\frac{1}{\sqrt{2\pi h}}\int_{\R}e^{-\frac{ix\cdot \xi}{h}}u(x)\,dx\quad\text{for }u\in C_0^\infty(\R).$$
In the case when $h=1$, $\Fc:=\Fc_1$ reduces to the usual Fourier transform. The FUP is formulated in the context of estimating the norm
\begin{equation}\label{eq:FUP}
\left\|\Char_X\Fc_h\Char_Y\right\|_{L^2(\R)\to L^2(\R)},
\end{equation}
in which the $h$-dependent sets $X=X(h),Y=Y(h)\subset\R$ are equipped with certain fractal-type structures. Following Dyatlov \cite[Definition 2.2]{Dy}, the fractal-type structure is characterized by
\begin{defn}[$\delta$-regular sets]\label{defn:reg}
Let $0\le\delta\le1$, $R\ge1$, and $0\le\alpha_{\mathrm{min}}\le\alpha_{\mathrm{max}}\le\infty$. We say that a non-empty closed set $X\subset\R$ is $\delta$-regular with constant $R$ on scales $\alpha_{\mathrm{min}}$ to $\alpha_{\mathrm{max}}$ if there is a locally finite measure $\nu_X$ supported on $X$ such that for every interval $I$ centered at a point in $X$ with $\alpha_{\mathrm{min}}\le|I|\le\alpha_{\mathrm{max}}$, we have that
$$R^{-1}|I|^\delta\le\nu_X(I)\le R|I|^\delta.$$
Here, $|E|$ denotes the Lebesgue volume of a measurable set $E\subset\R$.
\end{defn}
Since $\Fc_h$ is unitary in $L^2(\R)$, we always have that \eqref{eq:FUP}$=O(1)$. We say that $X,Y$ satisfy the FUP with an exponent $\beta\ge0$ if $\eqref{eq:FUP}=O(h^\beta)$ as $h\to0$. The main objective about the FUP for $\delta$-regular sets is to prove the existence of exponent $\beta>0$ and to find the sharp one for which the FUP holds. Here, ``the sharp exponent'', denoted by $\beta^s(X,Y)$, means the largest exponent such that $\eqref{eq:FUP}=O(h^\beta)$ holds for all $0<h<h_0$ with some $h_0>0$. It usually depends on the constants $\delta$ and $R$ in Definition \ref{defn:reg} of the sets $X,Y$ in question.

Firstly, if $X$ and $Y$ are $\delta$-regular on scales $h$ to $1$ with $\delta\in[0,1]$, then the FUP \eqref{eq:FUP} holds with an exponent 
\begin{equation}\label{eq:volbd}
\beta_\Vol=\max\left\{\frac12-\delta,0\right\}.
\end{equation}
This exponent \eqref{eq:volbd} is sometime referred as ``the volume bound'', because it only takes the volume of $X,Y$ into consideration, that is, $|X|,|Y|\le Ch^{1-\delta}$, see Bourgain-Dyatlov \cite[Lemma 2.9]{BD2}. 

The FUP with improved exponents over the volume bound \eqref{eq:volbd} have been proved in various settings by Bourgain-Dyatlov \cite{BD1, BD2}, Backus-Leng-Tao \cite{BLT}, Cohen \cite{C1,C2}, Cladek-Tao \cite{CT}, Dyatlov-Jin \cite{DJ1, DJ2}, Dyatlov-Zahl \cite{DyZa}, Han-Schlag \cite{HS}, Jin-Zhang \cite{JZ}, and etc. The sets considered in these works are deterministic and the improvement of the exponent is small (and is either implicitly or explicitly dependent on $\delta$ and $R$ in Definition \ref{defn:reg}). In particular, the following theorem includes explicit estimates on such improvement, which are exponentially small when $0<\delta\le\frac12$ by Dyatlov-Jin \cite[Theorem 1]{DJ2} and super-exponentially small when $\frac12<\delta<1$ by Jin-Zhang \cite[Theorem 1.2]{JZ}.
\begin{thm}\label{thm:FUPdeter}
Let $0<\delta<1$ and $R\ge1$. Suppose that $X,Y$ be $\delta$-regular with constant $R$ on scales $h$ to $1$. Then the FUP \eqref{eq:FUP} holds with an exponent $\beta$ such that
$$\beta-\beta_\Vol\ge\begin{cases}
(5R)^{-\frac{40}{\delta(1-\delta)}} & \text{if }0<\delta\le\frac12,\\
\exp\left[-\exp\left(C\left(R\delta^{-1}(1-\delta)^{-1}\right)^{C(1-\delta)^{-2}}\right)\right] & \text{if }\frac12<\delta<1.
\end{cases}$$
Here, $C>0$ is an absolute constant.
\end{thm}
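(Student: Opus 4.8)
The plan is the by-now-standard one: reduce the continuous FUP for $\delta$-regular sets to a discrete FUP on Cantor-type subsets of an $M$-adic tree, and then extract the improvement over $\beta_\Vol$ from an \emph{iterated one-step gain}. After rescaling $X,Y$ into $[0,1]$ I would fix an integer base $M=M(\delta,R)$ and pass to generation $k\sim\log(1/h)/\log M$, discarding the $M$-adic intervals that miss $X$ (resp.\ $Y$). Ahlfors--David regularity with constant $R$ forces each surviving interval to have between $\approx R^{-2}M^{\delta}$ and $\approx R^{2}M^{\delta}$ surviving children, which in particular dictates $M\gtrsim R^{2/\delta}$; thus $X$ and $Y$ become ``tree-like'' discrete $\delta$-regular sets on the $M$-adic tree with uniformly bounded branching. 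By Bourgain--Dyatlov \cite[\S2]{BD2} the norm in \eqref{eq:FUP} is then bounded, up to a factor depending only on $R$, by the $\ell^{2}\to\ell^{2}$ operator norm of the compressed discrete Fourier transform $B_k=\Char_{\mathcal X}\Fc_{M^k}\Char_{\mathcal Y}$ attached to these sets.

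The second ingredient is \emph{submultiplicativity}: the self-similar (mixed-radix) structure of the $M$-adic tree gives $\|B_{k+k'}\|\le\|B_k\|\,\|B_{k'}\|$. Together with the Hilbert--Schmidt bound $\|B_1\|\le\min\{M^{\delta-1/2},\,1\}=M^{-\beta_\Vol}$ this only reproduces $\beta_\Vol$, so it suffices to find one fixed scale $k_0=k_0(\delta,R)$ and an explicit $c=c(\delta,R)>0$ such that, uniformly over all admissible branchings,
\[
\|B_{k_0}\|\ \le\ (1-c)\,\big(M^{k_0}\big)^{-\beta_\Vol}.
\]
Iterating and converting $k_0\log M$ into a power of $h$ then gives the FUP with $\beta-\beta_\Vol\ge-\log(1-c)/(k_0\log M)$, which is the assertion of the theorem once $c$, $k_0$ and $M$ are estimated in terms of $\delta$ and $R$.

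The heart of the matter — and the step I expect to be by far the hardest — is this one-step gain, and it splits according to the size of $\delta$. For $0<\delta\le\frac12$ I would run Dolgopyat's method as in Dyatlov--Jin \cite{DJ2}: take a near-extremizer $f$ for $B_{k_0}$ and show that after one application its mass is forced to spread over many $M$-adic cylinders; the oscillatory factor $e^{-ix\xi/h}$, integrated against $f$ over a surviving cylinder, then exhibits genuine cancellation precisely because the surviving cylinders sit at a definite relative separation — this is exactly where the porosity of a $\delta$-regular set with $\delta<1$ is used — and no rearrangement of $f$ can evade it. Propagating the cancellation through the iteration yields a strict contraction whose size, after accounting for how many generations must be grouped together to see the separation and for the $\sim R$ loss incurred at each, comes out to the claimed order $(5R)^{-40/(\delta(1-\delta))}$. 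For $\frac12<\delta<1$ the volume bound is vacuous, so one genuinely needs decay from scratch; here I would follow Jin--Zhang \cite{JZ}, whose argument replaces Dolgopyat's oscillatory-cancellation mechanism by a more combinatorial (almost-orthogonality) induction on scales. The super-exponential degradation as $\delta\uparrow1$ reflects that the porosity — the fraction of each $M$-adic interval that is deleted — tends to $0$, so the cancellation available at a single step is minuscule and must be compounded over a (in fact doubly-exponentially) growing number of auxiliary scales, which is the source of the $\exp[-\exp(\cdots)]$ bound.

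Finally I would assemble the pieces — the reduction, the submultiplicative inequality, and the case-by-case one-step gain — and optimize $M$ and $k_0$ as functions of $\delta$ and $R$ to produce the stated explicit constants. As a sanity check, the weaker \emph{qualitative} statement $\beta>\beta_\Vol$ already follows from the original Bourgain--Dyatlov argument via the Beurling--Malliavin multiplier theorem \cite{BD1,BD2}; the whole content of Theorem \ref{thm:FUPdeter} lies in making the gain quantitative and tracking its dependence on $\delta$ and $R$, which is precisely what \cite{DJ2} does for $\delta\le\frac12$ and \cite{JZ} for $\delta>\frac12$.
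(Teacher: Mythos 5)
This theorem is not proved in the paper at all: it is quoted as a known result, and the sentence immediately preceding it makes the attribution explicit — the case $0<\delta\le\frac12$ is \cite[Theorem~1]{DJ2} and the case $\frac12<\delta<1$ is \cite[Theorem~1.2]{JZ}. So there is no ``paper's own proof'' for your sketch to be compared against, and the correct response to this item in the paper is simply to cite those two references. Your proposal does identify the right sources and the right high-level split into two cases, which is good.

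That said, two parts of your sketch misrepresent how those proofs actually run, in ways that matter. First, the submultiplicativity $\|B_{k+k'}\|\le\|B_k\|\,\|B_{k'}\|$ that your argument hinges on is \emph{not} available here. It holds for a discrete Cantor set built from one fixed alphabet at every generation (Ensemble~I in this paper), because then the set and its Fourier dual have an exact tensor structure. A general $\delta$-regular set has arbitrary, scale-dependent branching, so $B_{k+k'}$ does not factor and the inequality fails; the present paper even flags this failure in its remarks on Ensembles~II and~III. Dyatlov--Jin do not use a naive one-step gain plus iteration: they encode the regular sets as shift spaces and run Dolgopyat's contraction argument on the associated transfer operators, which is structurally quite different from ``find a contraction at a fixed scale $k_0$ and tensor it up.'' Second, your description of Jin--Zhang for $\frac12<\delta<1$ as a ``more combinatorial (almost-orthogonality) induction on scales'' is not accurate. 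Jin--Zhang make the Bourgain--Dyatlov proof of $\beta>0$ quantitative, and that proof is complex-analytic: it constructs auxiliary functions with prescribed support and spectral-gap properties via the Beurling--Malliavin multiplier theorem. The doubly-exponential loss in the constant comes from tracking quantitative bounds through that analytic construction, not from compounding a combinatorial gain over doubly-exponentially many scales. If you intend to give an actual proof, you would need to abandon the submultiplicative framework and follow the transfer-operator and Beurling--Malliavin routes of \cite{DJ2} and \cite{JZ} directly.
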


\begin{rmk}
The FUP depends on the framework of Fourier transform $\Fc_h$. In particular, a non-trivial function $u$ and its Fourier transform $\Fc_h u$ cannot be both compactly supported. However, in the framework of Walsh-Fourier transform, such phenomenon is allowed. In terms of the FUP, there are certain Cantor sets of dimension $\frac12\le\delta<1$ for which the FUP \eqref{eq:FUP} does \textit{not} hold with exponents greater than $\beta_\Vol=0$. This is due to Demeter \cite{De}.
\end{rmk}

In an ongoing project, we investigate the FUP \eqref{eq:FUP} when the sets $X,Y$ are constructed via certain random procedures, and in these random settings, we aim to prove the FUP with more favorable exponents than the ones in Theorem \ref{thm:FUPdeter} for the deterministic cases. In the paper \cite{EH} by Eswarathasan and the first-named author, we considered the FUP for the random Cantor sets in the discrete setting. That is, let $M,A\in\N$ such that $M\ge3$ and $A=M^\delta$ with $0<\delta<1$. Then the alphabets of cardinality $A$ from the digits $\{0,...,M-1\}$ form a probability space 
$$\Ab(M,A)=\left\{\Ac\subset\{0,...,M-1\}:\Card(\Ac)=A\right\},$$
which is equipped with the uniform counting measure $\mu_1$. Here, $\Card(\Ac)$ is the cardinality of $\Ac$.  

In \cite{EH}, an alphabet is chosen at random from $\Ab(M,A)$; then the discrete Cantor sets are constructed using this alphabet (in each iteration step). Let $\delta\in(0,\frac23)$. We proved that, with overwhelming probability (that is, except on a subset of $\Ab(M,A)$ with exponentially small measure depending on $M,\ve$), the Cantor sets with random alphabets satisfy the FUP with an exponent $\beta\ge\frac12-\frac34\delta-\ve$. It is therefore a significant improvement over the volume bound \eqref{eq:volbd} for this random ensemble.

In this paper, we consider the FUP for random Cantor sets in the continuous setting of $\R$. We use a different random procedure with the one in \cite{EH} for the construction. See Section \ref{sec:comp} for the different random ensembles of Cantor sets, as well as the comparison of approaches and results between the discrete and continuous settings.

Each Cantor set in $\R$ can be built via an iteration process. Let $\Ab(M,A)$ be as above. Inductively define 
$$B_1=\frac1M\Ac,\quad\text{in which }\Ac\in\Ab(M,A),$$
and for $j\ge2$,
$$B_j=\bigcup_{b\in B_{j-1}}\left\{b+\frac{a}{M^j}:a\in\Ac(b)\right\},$$
in which $\Ac(b)\in\Ab(M,A)$ with $b\in B_{j-1}$ are independent and identical distributed (iid) random variables. In particular, $\Card(B_j)=A^j$ for all $j\in\N$.

Write
\begin{equation}\label{eq:Cc}
\Cc_j=\bigcup_{b\in B_j}\left[b,b+\frac{1}{M^j}\right],\quad\text{and}\quad\Cc=\bigcap_{j=1}^\infty\Cc_j\subset[0,1].
\end{equation}
Hence, $\Cc_j$ is a union of $A^j$ closed intervals, each of which has Lebesgue volume of $M^{-j}$. Define also the Borel measure $\nu_j$ whose density function is given by
$$\rho_j(x)=\frac{M^j}{A^j}\Char_{\Cc_j}(x)\quad\text{for }x\in\R.$$
Finally, the weak limit $\nu$ of $\nu_j$ as $j\to\infty$ defines the Cantor measure which is supported on $\Cc$. 

We next set up the appropriate probability space for the random Cantor set $\Cc$ and the random Cantor measure $\nu$. In the first iteration, the probability space is given by $\Ab(M,A)$; in the $j$-th iteration for $j\ge2$, since $\Card(B_{j-1})=A^{j-1}$, the probability space is given by $\Ab(M,A)^{A^{j-1}}$ equipped with the uniform counting measure $\mu_j$. Write
\begin{equation}\label{eq:Ab}
\Ab^\infty:=\prod_{j=1}^\infty\Ab(M,A)^{A^{j-1}}\text{ equipped with the probability measure }\mu^\infty:=\prod_{j=1}^\infty\mu_j.
\end{equation}
Then $\Cc$ is a random Cantor set and $\nu$ is a random Cantor measure in $\R$ with respect to the probability space $\Ab^\infty$. 

It is obvious that each Cantor set $\Cc$ constructed in this way has Hausdorff dimension
$$\delta=\frac{\log A}{\log M}\in(0,1).$$
Moreover, each Cantor set $\Cc$ is $\delta$-regular with constant $R=2$ on scales $0$ to $1$, for which one can simply choose the Cantor measure $\nu$ in Definition \ref{defn:reg}, see Dyatlov \cite[Example 2.6]{Dy} for the example of the middle-third Cantor set (that is, $M=3$ and $\Ac=\{0,2\}$). 

Consider the $h$-neighborhoods $\Cc(h)$ of $\Cc$. Then $\Cc(h)$ is $\delta$-regular with constant $R=8$ on scales $h$ to $1$, see Bourgain-Dyatlov \cite[Lemma 2.3]{BD2}. Therefore, for \textit{each} Cantor set $\Cc$, the FUP \eqref{eq:FUP} holds for $X=Y=\Cc(h)$ with an exponent $\beta$ explicitly given in Theorem \ref{thm:FUPdeter}.

However, in the case when $0<\delta<\frac23$, we prove that, with overwhelming probability (that is, except on a subset of $\Ab^\infty$ with exponentially small measure), the FUP holds for the random Cantor sets with much better exponents than the ones in Theorem \ref{thm:FUPdeter}: 
\begin{thm}[FUP for random Cantor sets]\label{thm:FUPL}
Let $0<\delta<\frac23$. Suppose that $M,A\in\N$ such that $M\ge e^{4\delta^{-1}}$ and $A=M^\delta$. Then for $0<\ve<\frac\delta2$, there exists $\Gb\subset\Ab^\infty$ with
$$\mu^\infty\left(\Ab^\infty\setminus\Gb\right)\le C_1e^{-\frac{M^\ve}{C}}$$
such that each Cantor set $\Cc$ from $\Gb$ satisfies that
$$\left\|\Char_{\Cc(h)}\Fc_h\Char_{\Cc(h)}\right\|_{L^2(\R)\to L^2(\R)}\le Ch^{\frac12-\frac34\delta-\ve}\quad\text{for all }0<h<M^{-8}.$$
Here, $C>0$ is an absolute constant and $C_1=C_1(\ve)>0$ depends on $\ve$.
\end{thm}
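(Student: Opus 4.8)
The plan is to follow the well-established route from Fourier decay of the Cantor measure to the FUP, and to obtain the Fourier decay via a concentration-of-measure argument in the product space $\Ab^\infty$. First I would reduce the operator norm bound to a statement about the random Cantor measure $\nu$. The standard mechanism (as in Bourgain--Dyatlov \cite{BD2}) is: if one can show a power decay $|\widehat{\nu}(\xi)|\le C|\xi|^{-\beta_0}$ for all $|\xi|\ge1$ (or on the relevant dyadic range $1\le|\xi|\lesssim h^{-1}$), then $\Cc(h)$ satisfies the FUP with exponent essentially $\beta_\Vol+\beta_0=\max\{\tfrac12-\delta,0\}+\beta_0$, after accounting for the regularity constant and the $h$-neighborhood. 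So the target is a Fourier-decay exponent $\beta_0=\tfrac12-\tfrac34\delta-\ve-\beta_\Vol$; when $\delta<\tfrac12$ this is $\tfrac14\delta-\ve$, and when $\tfrac12\le\delta<\tfrac23$ it is $\tfrac12-\tfrac34\delta-\ve$ — in both cases a positive quantity on the stated range $\delta\in(0,\tfrac23)$. I would state this reduction as a lemma and defer its (routine, literature-based) proof.

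Next I would compute the Fourier transform of $\nu_j$ and exploit the product (convolution) structure of the construction. Since $B_j$ is built by appending, at scale $M^{-k}$, an independently chosen alphabet $\Ac(b)\in\Ab(M,A)$ to each surviving point $b\in B_{k-1}$, the measure $\nu_j$ factors as a ``telescoping'' product: writing $g_{\Ac}(\xi)=\tfrac1A\sum_{a\in\Ac}e^{-2\pi i a\xi}$ for the normalized exponential sum of an alphabet, one gets $\widehat{\nu_j}(\xi)$ as a product over scales $k=1,\dots,j$ of such factors, but with the alphabets at scale $k$ depending on the branch. Because the branches are independent, the key quantity to control is, for a dyadic frequency $|\xi|\sim M^k$, the size of the relevant single-scale factor $g_{\Ac}(\xi/M^{k})$ for a random $\Ac$. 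The heuristic is that for a uniformly random alphabet of size $A=M^\delta$ out of $M$ digits, $|g_\Ac(\theta)|$ is typically of size about $M^{-\delta/2}$ (square-root cancellation, $L^2$-average of a random exponential sum of $A$ unimodular terms normalized by $A$), which over $\sim\delta^{-1}\log(1/\ve)\cdot$(number of scales) iterations accumulates to the claimed decay rate $\tfrac14\delta$ — matching the $\delta<\tfrac12$ regime — and the modification for $\delta\ge\tfrac12$ comes from the interplay with $\beta_\Vol$ in the reduction lemma.

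The heart of the matter, and the main obstacle, is the concentration step: one must show that with overwhelming probability (failure set of measure $\le C_1 e^{-M^\ve/C}$) the single-scale factors are simultaneously small for all the relevant frequencies and all the branches, so that the product estimate holds uniformly. Here I would fix a dyadic frequency scale, fix a point $b\in B_{k-1}$, and view $|g_{\Ac(b)}(\theta)|^2$ as a function on $\Ab(M,A)$ with small Lipschitz/bounded-difference constant (swapping one digit of $\Ac$ changes $g_\Ac$ by $O(1/A)$); McDiarmid's bounded-differences inequality (or a Hoeffding-type bound for sampling without replacement, which is what $\Ab(M,A)$ is) then gives exponential concentration of $|g_{\Ac(b)}|$ around its mean, with deviation parameter on the order of $M^\ve$ after optimizing. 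Then I would union-bound over the $\le A^{k-1}$ branches at scale $k$ and over the $O(\log(1/h))$ dyadic scales; since $A^{k-1}\lesssim h^{-\delta}$ and $h>M^{-8}$, the number of events is polynomial in $M$, which is absorbed by the exponential gain $e^{-M^\ve/C}$ — this is exactly why the constraint $M\ge e^{4\delta^{-1}}$ (making $M$ large relative to $\delta^{-1}$) and the smallness $h<M^{-8}$ enter. A subtlety to handle carefully is the discreteness of $\xi$: $\widehat\nu$ is continuous, so I would restrict to a sufficiently fine net of frequencies, control the factors there, and pass to all $\xi$ using the Lipschitz bound $|\widehat\nu(\xi)-\widehat\nu(\xi')|\lesssim|\xi-\xi'|\,\mathrm{diam}(\mathrm{supp}\,\nu)\le|\xi-\xi'|$; likewise the dependence of the product bound on the truncation level $j$ versus the frequency $|\xi|$ must be arranged so that only $\sim\log_M|\xi|$ factors are actually used and the tail is harmless. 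Once the uniform Fourier-decay estimate holds on the good set $\Gb$, I would plug it into the reduction lemma to conclude the FUP with exponent $\tfrac12-\tfrac34\delta-\ve$ on the range $0<h<M^{-8}$, completing the proof.
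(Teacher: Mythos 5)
Your overall architecture (Fourier decay of $\nu$ $\Rightarrow$ FUP via Schur's test, with the Fourier decay obtained by concentration-of-measure and a union bound over a frequency net) matches the paper, and your choice of concentration tool (bounded differences / sampling without replacement on $\Ab(M,A)$) is indeed the same as the paper's Theorem \ref{thm:comA}. But three steps in the plan are either wrong or would fail as stated. First, the reduction formula $\beta=\beta_{\Vol}+\beta_0$ is not what the Schur argument gives: with Fourier decay $|\Fc\nu(\xi)|\lesssim|\xi|^{-\gamma}$ and $\delta$-regularity, the kernel estimate yields $\|\Char_{\Cc(h)}\Fc_h\Char_{\Cc(h)}\|\lesssim h^{\frac12-\delta+\frac{\gamma}{2}}$, so the Fourier decay contributes $\gamma/2$, not $\gamma$. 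This is why the paper must establish the (near-)optimal decay $\gamma=\frac{\delta-\ve}{2}$, which then gives exactly $\frac12-\frac34\delta-\ve$; the weaker decay $\gamma=\frac14\delta-\ve$ you target would only produce $\frac12-\frac78\delta-\frac\ve2$. Second, in Ensemble III the transform $\Fc\nu_j(\xi)$ does \emph{not} factor as a product over scales, precisely because $\Ac(b)$ depends on the branch $b$; your "product over scales with alphabets depending on the branch" is not a product, and the heuristic multiplicativity of the single-scale factors does not apply. The paper instead uses a telescoping \emph{sum}: $\Fc\nu(\xi)=\Fc\nu_1(\xi)+\sum_{j\ge2}(\Fc\nu_j(\xi)-\Fc\nu_{j-1}(\xi))$, where each increment is a phase-weighted average $\frac{1}{A^{j-1}}\sum_{b\in B_{j-1}}e^{-i\xi b}F_\eta(\Ac(b))$ of iid mean-zero random variables.

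Third, and most importantly, you propose to control each single-scale factor $|g_{\Ac(b)}(\theta)|$ separately and union bound over the $\le A^{k-1}$ branches at scale $k$. That union bound is not polynomial in $M$: the theorem asserts the FUP for \emph{all} $0<h<M^{-8}$ (not $h>M^{-8}$, which you appear to have misread), so $A^{k-1}\lesssim h^{-\delta}$ is unbounded as $h\to0$ and the deviation parameter you would need grows like $\sqrt{k\log A/A}\gg A^{-1/2}$; the accumulated loss over $k\sim\log_M|\xi|$ scales would destroy the claimed $|\xi|^{-\delta/2}$ decay. The paper avoids this entirely by applying Bernstein's inequality to the \emph{average} over the $A^{j-1}$ branches: the increment concentrates at scale $\sim A^{-j/2}$ uniformly in the (exponentially many) branches, and the union bound is then taken only over a net of frequencies $\eta\in(0,2\pi]$ of polynomial size $\sim A^{j/2}M^j$, which is comfortably absorbed by the failure probability $\exp(-L_j^2/C)$ with $L_j=M^{j\ve/2}$. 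Without this averaging step your concentration argument does not close.
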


We also prove an FUP for the random Cantor measures $\nu$:
\begin{thm}[FUP for random Cantor measures]\label{thm:FUPC}
Under the same conditions as Theorem \ref{thm:FUPL}, the corresponding random Cantor measure $\nu$ satisfies that
$$\left\|\int_{\R}e^{-\frac{ix\cdot \xi}{h}}u(x)\,d\nu(x)\right\|_{L^2_\nu(\R)}\le Ch^{\frac\delta4-\ve}\|u\|_{L^2_\nu(\R)}\quad\text{for all }u\in L^2_\nu(\R)\text{ and }0<h<M^{-8}.$$
\end{thm}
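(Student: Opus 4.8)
The plan is to deduce Theorem~\ref{thm:FUPC} from a decay estimate for the Fourier transform $\widehat\nu(\eta):=\int_\R e^{-i\eta x}\,d\nu(x)$ of the random Cantor measure: that there is $\Gb\subset\Ab^\infty$ with $\mu^\infty(\Ab^\infty\setminus\Gb)\le C_1e^{-M^\ve/C}$ on which $|\widehat\nu(\xi)|\le C|\xi|^{-\frac\delta2+\ve}$ for all $|\xi|\ge M$; the concentration-of-measure input is used only to prove this. Granting it, write $T_hu(\xi)=\int_\R e^{-ix\xi/h}u(x)\,d\nu(x)$. Then $T_h^*T_h$ is the integral operator on $L^2_\nu(\R)$ with kernel $K(y,x)=\widehat\nu\big((x-y)/h\big)$, so Schur's test gives $\|T_h\|^2=\|T_h^*T_h\|\le\sup_y\int|\widehat\nu((x-y)/h)|\,d\nu(x)$. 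I would split this integral at $|x-y|=Mh$. On $|x-y|\le Mh$ one uses $|\widehat\nu|\le1$ and the $\delta$-regularity of $\nu$ with constant $2$ on scales $0$ to $1$ (recorded in the introduction) to get $\nu(B(y,Mh))\le C(Mh)^\delta$, which is $\lesssim h^{\frac\delta2-2\ve}$ once $h<M^{-8}$, since then $M^\delta h^{\frac\delta2+2\ve}\le M^{-3\delta-16\ve}\le1$. On $|x-y|>Mh$ I decompose dyadically into the shells $2^kMh\le|x-y|<2^{k+1}Mh$: $\delta$-regularity bounds the $\nu$-mass of each shell by $C(2^kMh)^\delta$, the decay estimate bounds $|\widehat\nu|$ there by $C(2^kM)^{-\frac\delta2+\ve}$, and since $0<\tfrac\delta2-\ve<\delta$ the geometric series sums to $\lesssim h^{\frac\delta2-\ve}$. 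Thus $\|T_h\|^2\lesssim h^{\frac\delta2-2\ve}$ on $\Gb$, which is the asserted bound after renaming $\ve$.

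The heart of the matter is the Fourier decay estimate, and here I would use the self-similar, tree-like construction together with conditioning level by level. Fix $\xi$ with $|\xi|\ge M$, put $j^*=\lfloor\log_M|\xi|\rfloor$ and $j\approx(1+\tfrac\delta2)j^*$, and let $\Eb_\ell$ denote conditional expectation given the alphabets used in the first $\ell$ levels. Since at each level the alphabet is uniformly random, $\Eb_\ell[\widehat\nu(\xi)]=\widehat{\nu_\ell}(\xi)$, so $(\widehat{\nu_\ell}(\xi))_{\ell\ge0}$ (with $\nu_0=\mathrm{Leb}|_{[0,1]}$, hence $\widehat{\nu_0}(\xi)=\psi(\xi)$, $\psi(t):=\tfrac{1-e^{-it}}{it}$) is a complex martingale whose increments work out to
\begin{equation*}
D_\ell=\widehat{\nu_\ell}(\xi)-\widehat{\nu_{\ell-1}}(\xi)=\psi\big(M^{-\ell}\xi\big)\cdot\frac1{A^{\ell-1}}\sum_{b\in B_{\ell-1}}e^{-ib\xi}\big(V_b^{(\ell)}-\Eb_{\ell-1}V_b^{(\ell)}\big),\qquad V_b^{(\ell)}:=\frac1A\sum_{a\in\Ac(b)}e^{-ia\xi/M^\ell}.
\end{equation*}
Then $\widehat\nu(\xi)=\psi(\xi)+\sum_{\ell=1}^{j}D_\ell+\big(\widehat\nu(\xi)-\widehat{\nu_j}(\xi)\big)$, and I would estimate the three pieces separately. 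The term $\psi(\xi)$ is $O(|\xi|^{-1})$, and the tail $\widehat\nu(\xi)-\widehat{\nu_j}(\xi)$ is $O(|\xi|M^{-j})$ (compare $\nu$ and $\nu_j$ interval by interval at scale $M^{-j}$), which the choice of $j$ makes $\lesssim|\xi|^{-\frac\delta2+\ve}$; both are deterministic. For the martingale sum the key observation is $|\psi(M^{-\ell}\xi)|\le2M^\ell/|\xi|$ when $\ell\le j^*$, so the low levels $\ell\le(1-\tfrac\delta2+\ve)j^*$ together contribute $\lesssim|\xi|^{-\frac\delta2+\ve}$ using only the crude bound $|V_b^{(\ell)}-\Eb_{\ell-1}V_b^{(\ell)}|\le2$ — no randomness is needed there. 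This localizes the probabilistic work to the $O(\log|\xi|)$ remaining levels $\ell$, all of which satisfy $\ell\gtrsim j^*$ (it is here that $\delta<\tfrac23$ keeps that window of levels harmless).

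On those levels I would invoke concentration. Conditionally on the first $\ell-1$ levels, the $V_b^{(\ell)}$ ($b\in B_{\ell-1}$) are i.i.d.\ Weyl sums of uniformly random $A$-element subsets of $\{0,\dots,M-1\}$, with common mean $\tfrac1M\sum_{a=0}^{M-1}e^{-ia\xi/M^\ell}$ and — this is precisely where one uses that the alphabet is \emph{random} — variance of order $1/A$, uniformly in $\xi$ (the only exception being $\xi/M^\ell$ extremely close to $2\pi\Z$, which is harmless since $\psi(M^{-\ell}\xi)$ is then itself small). A Bernstein-type inequality for a bounded i.i.d.\ average with this small variance — equivalently, the concentration of measure for sampling without replacement — then yields, with $\ve'=\ve/\delta$,
\begin{equation*}
\Pr\Big[\,|D_\ell|>A^{-\ell/2+\ve'\ell}\ \Big|\ \text{first }\ell-1\text{ levels}\,\Big]\le C\exp\big(-cA^{2\ve'\ell}\big),
\end{equation*}
and since the relevant $\ell$ obey $A^\ell\gtrsim|\xi|^{\delta(1-\delta/2)}$ each such bad event has probability $\le\exp(-c|\xi|^{c_\delta\ve})$; summing these $D_\ell$-bounds over the $O(\log|\xi|)$ levels gives $\sum_\ell|D_\ell|\lesssim|\xi|^{-\frac\delta2+\ve}$ off an event of that probability. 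Finally I run a union bound over a net of spacing $\sim|\xi|^{-\delta/2}$ on each dyadic frequency shell $|\xi|\asymp2^k\ge M$ (legitimate because $\widehat\nu$ is $1$-Lipschitz), and then over $k$; the per-point failure probabilities decay fast enough in $2^k$ that the double sum converges and is controlled by the first shell $|\xi|\asymp M$, producing $\Gb$ with $\mu^\infty(\Ab^\infty\setminus\Gb)\le C_1e^{-M^\ve/C}$.

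The hardest part will be the concentration step, and it is instructive to see why it cannot be done softly: the naive bounded-differences (McDiarmid/Azuma) bound is useless, because the martingale increments $D_\ell$ have ranges of order $1$ whereas one must control them at the far smaller scale $|\xi|^{-\frac\delta2+\ve}$. One is therefore forced to extract and use the genuinely small ($\sim1/A$) variance of the random-alphabet Weyl sums — a quantitative concentration-of-measure property of the uniform measure on $A$-subsets of $\{0,\dots,M-1\}$ — and to propagate it through the branching structure via the conditional martingale decomposition without degrading the exponent. The complementary difficulty is organizational: the split of $\sum D_\ell$ into deterministically-controlled low levels and probabilistically-controlled high levels is exactly what makes the per-shell failure probability summable, and the parameters $j$, $\ve'$ and the threshold $M$ must be reconciled with the Schur-test bookkeeping and with the standing hypotheses $\delta<\tfrac23$, $M\ge e^{4\delta^{-1}}$, $h<M^{-8}$ so that the argument still works in the interesting small-$\ve$ regime.
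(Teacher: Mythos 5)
Your proposal is correct and follows essentially the same route as the paper: Schur's test applied to $T^\star T$, whose kernel is a rescaled $\Fc\nu$, combined with a probabilistic Fourier decay estimate for $\nu$ proved by telescoping $\Fc\nu$ into level-by-level increments (your $D_\ell$ is exactly the paper's $G(\eta)$ after the substitution $\eta=\xi/M^j$, and your $V_b^{(\ell)}-\Eb_{\ell-1}V_b^{(\ell)}$ is exactly $F_\eta(\Ac(b))$), each controlled via Bernstein's inequality with the $O(1/A)$ variance coming from concentration of measure on $\Ab(M,A)$, and then extended from a discrete net to all frequencies by a Lipschitz/derivative bound. Your reorganizations --- casting the telescoping as a martingale, handling low levels deterministically while reserving concentration for high levels, and taking the net in $\xi$ on dyadic shells rather than in the $2\pi$-periodic variable $\eta=\xi/M^j$ as the paper does --- are cosmetic rather than a genuinely different method. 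One small inaccuracy worth flagging: you assert that $\delta<\tfrac23$ is what keeps the window of probabilistically-treated levels harmless, but the Fourier decay estimate (Theorem~\ref{thm:FD}) in fact holds for all $\delta\in(0,1)$; the restriction $\delta<\tfrac23$ only enters Theorem~\ref{thm:FUPC} because it is stated ``under the same conditions as Theorem~\ref{thm:FUPL},'' where it is needed to make the FUP exponent $\tfrac12-\tfrac34\delta$ positive.
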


The proofs of the FUP in Theorem \ref{thm:FUPL} and \ref{thm:FUPC} are based on the following Fourier decay estimate of the random Cantor measures:
\begin{thm}[Fourier decay of random Cantor measures]\label{thm:FD}
Let $0<\delta<1$. Suppose that $M,A\in\N$ such that $M\ge e^{4\delta^{-1}}$ and $A=M^\delta$. Then for $0<\ve<\min\{\frac\delta2,\frac13\}$, there exists $\Gb\subset\Ab^\infty$ with
$$\mu^\infty\left(\Ab^\infty\setminus\Gb\right)\le C_1e^{-\frac{M^\ve}{C}}$$
such that each Cantor measure $\nu$ from $\Gb$ satisfies that
$$|\Fc\nu(\xi)|\le C|\xi|^{-\frac{\delta-\ve}{2}}\quad\text{for all }|\xi|\ge M^4.$$
Here, $C>0$ is an absolute constant and $C_1=C_1(\ve)>0$ depends on $\ve$.
\end{thm}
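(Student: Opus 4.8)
Since $\nu$ is a probability measure supported in $[0,1]$, the Fourier transform $\Fc\nu$ is $1$-Lipschitz, and $\Fc\nu(-\xi)=\overline{\Fc\nu(\xi)}$. So it suffices to prove, for each fixed $\xi$ with $M^{n}\le\xi<M^{n+1}$ (hence $n\ge4$), the per-frequency tail bound $\Pr\!\big(|\Fc\nu(\xi)|>\xi^{-\frac{\delta-\ve}{2}}\big)\le C\,e^{-c\,\xi^{\ve}}$ with $c,C>0$ absolute, and then to take a union bound over a net of spacing $\asymp\xi^{-\frac{\delta-\ve}{2}}$ inside each dyadic band $2^{k}\le|\xi|<2^{k+1}$. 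To obtain the per-frequency estimate I would exploit the branching structure. Put $\eta_{j}=\xi/M^{j}$. The construction gives $\nu=A^{-j}\sum_{b\in B_{j}}(T_{b,M^{-j}})_{*}\nu^{(b)}$, where $T_{b,r}(x)=b+rx$ and the $\nu^{(b)}$ are i.i.d.\ copies of $\nu$ independent of the $\sigma$-algebra $\mathcal{G}_{j}$ generated by the alphabets at nodes of depth $<j$, so $\Fc\nu(\xi)=A^{-j}\sum_{b\in B_{j}}e^{-ib\xi}\Fc\nu^{(b)}(\eta_{j})$. Since the mean measure $\Eb[\nu]$ is Lebesgue measure on $[0,1]$ (the self-similar measure of the full IFS $\{x\mapsto(x+a)/M\}_{a=0}^{M-1}$, by symmetry of $\Ab(M,A)$), the Doob martingale $Y_{j}:=\Eb[\Fc\nu(\xi)\mid\mathcal{G}_{j}]$ equals $g(\eta_{j})S_{j}(\xi)$, with $g:=\Fc(\Char_{[0,1]}\,dx)$ and $S_{j}(\xi):=A^{-j}\sum_{b\in B_{j}}e^{-ib\xi}$. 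Writing $\sigma_{b}(\theta):=A^{-1}\sum_{a\in\Ac(b)}e^{-ia\theta}$ (so that $\Eb[\sigma_{b}(\theta)]=M^{-1}D_{M}(\theta)$, with $D_{M}(\theta)=\sum_{a=0}^{M-1}e^{-ia\theta}$) and using the Dirichlet-kernel identity $g(\theta)D_{M}(\theta)=M\,g(M\theta)$, one computes that the increments $D_{j}:=Y_{j}-Y_{j-1}$ equal $A^{-(j-1)}\sum_{b'\in B_{j-1}}e^{-ib'\xi}\,g(\eta_{j})\big(\sigma_{b'}(\eta_{j})-M^{-1}D_{M}(\eta_{j})\big)$, which conditionally on $\mathcal{G}_{j-1}$ is a sum of $A^{j-1}$ independent mean-zero terms of modulus $\le2A^{-(j-1)}|g(\eta_{j})|$. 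As $(Y_{j})$ is a bounded martingale, $Y_{j}\to\Fc\nu(\xi)$ a.s.\ and in $L^{2}$, so $\Fc\nu(\xi)-g(\xi)=\sum_{j\ge1}D_{j}$, with $|g(\xi)|\lesssim\xi^{-1}$ negligible at our scales.

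The engine is Bernstein's inequality run through this martingale. I would prove two estimates: (i) the \emph{deterministic} quadratic-variation bound $\sum_{j\ge1}\Eb[|D_{j}|^{2}\mid\mathcal{G}_{j-1}]=\sum_{j\ge1}A^{-(j-1)}v_{j}\le C_{0}\,\xi^{-\delta}$ with $C_{0}$ absolute, where $v_{j}=|g(\eta_{j})|^{2}\Var\sigma(\eta_{j})$; this uses $\Var\sigma(\theta)\le A^{-1}+M^{-2}|D_{M}(\theta)|^{2}$ together with the identity $|g(\theta)D_{M}(\theta)|^{2}=M^{2}|g(M\theta)|^{2}$, which cancels the spikes of the Dirichlet kernel near $2\pi\Z$ and reduces everything to $\sum_{j\ge0}A^{-j}|g(\xi/M^{j})|^{2}\lesssim\xi^{-\delta}$ — summable with an absolute constant because $|g(\xi/M^{j})|^{2}\lesssim\min(1,M^{2j}\xi^{-2})$ and $M^{2}/A=M^{2-\delta}>1$, the series being dominated by its term at $j\approx\log_{M}\xi$; and (ii) the size bound $b_{j}:=2A^{-(j-1)}|g(\eta_{j})|\lesssim M^{\delta}\xi^{-\delta}$, with $\sum_{j}b_{j}<\infty$. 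Because $|B_{j-1}|=A^{j-1}$ and $v_{j}$ depends only on a fresh alphabet, the quantities $V_{j}:=\Eb[|D_{j}|^{2}\mid\mathcal{G}_{j-1}]$ and $b_{j}$ are deterministic; iterating the conditional Bernstein moment-generating bound $\Eb[e^{\lambda\,\mathrm{Re}\,D_{j}}\mid\mathcal{G}_{j-1}]\le\exp\!\big(\tfrac{\lambda^{2}V_{j}/2}{1-\lambda b_{j}/3}\big)$ yields $\Eb[e^{\lambda\,\mathrm{Re}(\Fc\nu(\xi)-g(\xi))}]\le\exp(\lambda^{2}C_{0}\xi^{-\delta})$ whenever $\lambda\max_{j}b_{j}<3$. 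Choosing $\lambda\asymp\xi^{(\delta+\ve)/2}$ makes $\lambda\max_{j}b_{j}\lesssim M^{2\ve-\delta}<1$ (here $\ve<\delta/2$ and $\xi\ge M^{4}$ enter), and gives $\Pr\!\big(|\mathrm{Re}(\Fc\nu(\xi)-g(\xi))|>\tfrac38\xi^{-\frac{\delta-\ve}{2}}\big)\le2e^{-c\xi^{\ve}}$, and likewise for $\mathrm{Im}$; since $|g(\xi)|\lesssim\xi^{-1}\le\tfrac14\xi^{-\frac{\delta-\ve}{2}}$ for $\xi\ge M^{4}$, this is the per-frequency tail bound. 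Finally, each dyadic band with $2^{k}\ge M^{4}$ carries a net of $\lesssim2^{k(1+(\delta-\ve)/2)}$ points — polynomial in $2^{k}$ — each contributing exceptional probability $\lesssim e^{-c2^{k\ve}}$; summing over these bands and invoking the Lipschitz bound gives $|\Fc\nu(\xi)|\le C|\xi|^{-\frac{\delta-\ve}{2}}$ for all $|\xi|\ge M^{4}$ outside an event of $\mu^{\infty}$-measure $\le C_{1}(\ve)e^{-M^{\ve}/C}$ with $C$ absolute.

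The only point of genuine resistance is the passage from the increments to the tail bound. A naive Azuma--Hoeffding estimate — equivalently a bounded-differences (McDiarmid) inequality on the product space $\Ab^{\infty}$ — cannot suffice: the increment $D_{n}$ at the critical scale $n\approx\log_{M}\xi$ has $\|D_{n}\|_{\infty}\asymp1$ but $\|D_{n}\|_{2}\asymp\xi^{-\delta/2}$, so any bound depending only on $\max_{j}\|D_{j}\|_{\infty}$ stalls near $\xi^{-\delta/2}$, far above the target; relatedly, the alphabet at a node $w$ of the full $M$-ary tree has influence $\asymp A^{-|w|}$ on $\Fc\nu(\xi)$, and $\sum_{w}(A^{-|w|})^{2}=\sum_{k}M^{k}A^{-2k}$ diverges once $\delta\le\tfrac12$. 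The resolution is that each $D_{j}$ is \emph{itself} a sum of $A^{j-1}$ independent small terms, so the correct measure of spread is the quadratic variation $\sum_{j}\Eb[|D_{j}|^{2}\mid\mathcal{G}_{j-1}]$; what makes everything close is that this admits the \emph{clean} deterministic bound $C_{0}\xi^{-\delta}$ — absolute constant, no parasitic powers of $M$ — and it is precisely the cancellation encoded in $|g(\theta)D_{M}(\theta)|^{2}=M^{2}|g(M\theta)|^{2}$ that produces it, and with it the decay exponent $\tfrac{\delta-\ve}{2}$. Everything else is bookkeeping: the geometric-sum estimates with explicit constants, the verification $\lambda\max_{j}b_{j}<1$, and the assembly of the union bound into the stated exceptional-probability bound.
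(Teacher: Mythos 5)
Your proposal is correct, and at its computational core it coincides with the paper's: your Doob martingale $Y_j=\Eb[\Fc\nu(\xi)\mid\mathcal{G}_j]=\Fc\nu_j(\xi)$ is exactly the telescoping sequence the paper works with, your increment $D_j$ (after the Dirichlet-kernel identity $g(\theta)D_M(\theta)=Mg(M\theta)$) is the paper's quantity $G(\eta)$ with $\eta=\xi/M^j$, and both proofs run on Bernstein's inequality for sums of iid mean-zero terms at each level. Where you differ is the probabilistic bookkeeping. The paper applies Bernstein level-by-level at a finite net of $\eta_k\in(0,2\pi]$, interpolates to all $\eta$ via an explicit bound on $G'(\eta)$, extracts a level-$j$ good event $\Gb_j$ with parameter $L_j=M^{j\ve/2}$, intersects over $j$, and on the intersection sums the deterministic bounds $|\Fc\nu_j-\Fc\nu_{j-1}|\lesssim\min(1,M^j/|\xi|)\,L_jA^{-j/2}$. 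You instead iterate the conditional Bernstein moment-generating bound across levels --- a Freedman-type martingale Bernstein inequality --- using the deterministic cumulative conditional variance $\sum_j\Eb[|D_j|^2\mid\mathcal{G}_{j-1}]\lesssim\xi^{-\delta}$ and the increment cap $\max_jb_j\lesssim M^\delta\xi^{-\delta}$ to produce a single per-frequency tail $\lesssim e^{-c\xi^\ve}$, then union-bound over a $\xi$-net in dyadic bands, filling in via the trivial Lipschitz bound on $\Fc\nu$. Your route is the sharper and more conceptual one: it isolates the variance budget $\xi^{-\delta}$ as the quantity that fixes the decay exponent and avoids maintaining a separate exceptional event per level and per $\eta$-net point; the paper's route is more elementary, dispensing with martingale MGF machinery and with the identification $\Eb[\nu\mid\mathcal{G}_j]=\nu_j$ (which your argument tacitly requires and should state). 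Your diagnosis that Azuma/McDiarmid stalls near $\xi^{-\delta/2}$ because $\|D_n\|_\infty\asymp1$ at the critical scale $n\approx\log_M\xi$ is precisely why both proofs lean on the variance term in Bernstein rather than the sup bound. Two minor remarks: the Dirichlet-kernel cancellation $|g(\theta)D_M(\theta)|^2=M^2|g(M\theta)|^2$ is not actually needed for the quadratic-variation sum, since the uniform estimate $\Var\sigma(\theta)\le A^{-1}$ (cf.\ the paper's Corollary~\ref{cor:var}, which gives $\Var F_\eta\le 32/A$ and $\Var\sigma=\Var F_\eta$) already yields $\sum_jA^{-j}|g(\eta_j)|^2\lesssim\xi^{-\delta}$ by the geometric-sum argument you describe; and in the bounded-differences aside, the alphabet coordinates live at the $A^{k}$ selected nodes of depth $k$ rather than all $M^k$ nodes of the full tree.
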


\begin{rmk}[Fourier dimension and Salem sets]
The Fourier dimension $\dim_\mathrm{F}E$ of a Borel set $E\subset\R$ is the largest number $s$ such that there is a finite Borel measure $\nu$ supported on $E$ which satisfies that $\Fc\nu(\xi)=O(|\xi|^{-\frac s2})$. We always have that $\dim_\mathrm{F}E\le\dim_\mathrm{H}E$, the Hausdorff dimension of $E$, because the strongest Fourier decay estimate for a measure supported on $E$ is $O(|\xi|^{-\frac{\dim_\mathrm{H}E}{2}})$. We say that $E$ is a Salem set if $\dim_\mathrm{F}F=\dim_\mathrm{H}E$. Salem \cite{S} constructed the first example of such sets, which is random in nature and is Cantor-like. See Mattila \cite[Chapter 12]{M} for more details.

The random Cantor measures in Theorem \ref{thm:FD} satisfy the (almost) strongest Fourier decay estimates. Notice also that it is valid for the full range of $\delta\in(0,1)$. However, only in the smaller range of $\delta\in(0,\frac23)$ does it imply the FUP in Theorems \ref{thm:FUPL} and \ref{thm:FUPC}. Our construction of the random Cantor sets is largely inspired by \L{}aba-Pramanik \cite[Section 6]{LP}, which provides examples of Salem sets via a different random procedure with Salem's original approach \cite{S}. 
\end{rmk}

\subsection{Random ensembles in the discrete setting and the continuous setting}\label{sec:comp}
Continue with our notations of $\Ab(M,A)$ with $M,A\in\N$ and $A=M^\delta$, $0<\delta<1$. We discuss the FUP for different ensembles of Cantor sets. The boundary points of the Cantor set in the continuous setting of $\R$ at each iteration $j\in\N$ naturally define the Cantor set in the discrete setting of 
$$\Z_N:=\frac1N\Z/N\Z\quad\text{with }N=M^j.$$
This connection allows us to compare the approaches and results on the FUP in the discrete setting and in the continuous setting. (We shall remark that for direct comparison between the two settings, the discrete set $\Z_N$ here differs with the one in \cite{DJ1, EH} by a scaling factor of $N$.) 

\textbf{Ensemble I}. Let $\Ac\in\Ab(M,A)$ be randomly chosen. For $j\in\N$, define the discrete Cantor set of order $j$ as
$$B_j=\left\{\sum_{l=1}^j\frac{a_l}{M^l}:a_l\in\Ac\text{ for }l=1,...,j\right\},$$
for which the probability space is $\Ab(M,A)$.

\textbf{Ensemble II}. For $j\in\N$, let $\Ac_k\in\Ab(M,A)$ for $l=1,...,j$ be iid random variables. Define the discrete Cantor set of order $j$ as
$$B_j=\left\{\sum_{l=1}^j\frac{a_l}{M^l}:a_l\in\Ac_l\text{ for }l=1,...,j\right\},$$
for which the probability space is $\prod_{l=1}^j\Ab(M,A)$.

\textbf{Ensemble III}. Let $B_1=\frac1M\Ac$ with $\Ac\in\Ab(M,A)$. For $j\ge2$, let $\Ac(b)\in\Ab(M,A)$ for $b\in B_{j-1}$ be iid random variables, and define the discrete Cantor set of order $j$ as
$$B_j=\bigcup_{b\in B_{j-1}}\left\{b+\frac{a}{M^j}:a\in\Ac(b)\right\},$$
for which the probability space is $\prod_{k=1}^j\Ab(M,A)^{A^{k-1}}$.

In each of the ensembles above, the Cantor sets $\Cc$ in $\R$ are defined by \eqref{eq:Cc}. The following three figures demonstrate (the initial iterations of) examples of the Cantor sets with $M=3$ and $A=2$.

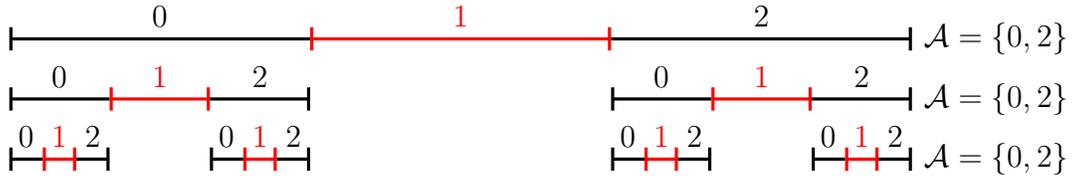
\begin{figure}[h]
\centering
\begin{tikzpicture}
\draw[black,very thick,|-] (0,0) -- (4,0) node[midway,above] {$0$};
\draw[red,very thick,|-|] (4,0) -- (8,0) node[midway,above] {$1$};
\draw[black,very thick,-|] (8,0) -- (12,0) node[midway,above] {$2$} node[right] {$\Ac=\{0,2\}$}; 
\draw[black,very thick,|-] (0,-.8) -- (4/3,-.8) node[midway,above] {$0$};
\draw[red,very thick,|-|] (4/3,-.8) -- (8/3,-.8) node[midway,above] {$1$};
\draw[black,very thick,-|] (8/3,-.8) -- (4,-.8) node[midway,above] {$2$}; 
\draw[black,very thick,|-] (8,-.8) -- (28/3,-.8) node[midway,above] {$0$};
\draw[red,very thick,|-|] (28/3,-.8) -- (32/3,-.8) node[midway,above] {$1$};
\draw[black,very thick,-|] (32/3,-.8) -- (12,-.8) node[midway,above] {$2$} node[right] {$\Ac=\{0,2\}$}; 
\draw[black,very thick,|-] (0,-1.6) -- (4/9,-1.6) node[midway,above] {$0$};
\draw[red,very thick,|-|] (4/9,-1.6) -- (8/9,-1.6) node[midway,above] {$1$};
\draw[black,very thick,-|] (8/9,-1.6) -- (4/3,-1.6) node[midway,above] {$2$};
\draw[black,very thick,|-] (8/3,-1.6) -- (28/9,-1.6) node[midway,above] {$0$}; 
\draw[red,very thick,|-|] (28/9,-1.6) -- (32/9,-1.6) node[midway,above] {$1$}; 
\draw[black,very thick,-|] (32/9,-1.6) -- (4,-1.6) node[midway,above] {$2$}; 
\draw[black,very thick,|-] (8,-1.6) -- (76/9,-1.6) node[midway,above] {$0$};
\draw[red,very thick,|-|] (76/9,-1.6) -- (80/9,-1.6) node[midway,above] {$1$};
\draw[black,very thick,-|] (80/9,-1.6) -- (84/9,-1.6) node[midway,above] {$2$};
\draw[black,very thick,|-] (96/9,-1.6) -- (100/9,-1.6) node[midway,above] {$0$}; 
\draw[red,very thick,|-|] (100/9,-1.6) -- (104/9,-1.6) node[midway,above] {$1$}; 
\draw[black,very thick,-|] (104/9,-1.6) -- (12,-1.6) node[midway,above] {$2$} node[right] {$\Ac=\{0,2\}$}; 
\end{tikzpicture}
\caption{The initial three iterations and the alphabet used for a Cantor set in Ensemble I. (The intervals colored red are removed in the iteration process.)}
\end{figure}

\begin{figure}[h]
\centering
\begin{tikzpicture}
\draw[black,very thick,|-|] (0,0) -- (4,0) node[midway,above] {$0$};
\draw[black,very thick,-] (4,0) -- (8,0) node[midway,above] {$1$};
\draw[red,very thick,|-|] (8,0) -- (12,0) node[midway,above] {$2$};
\draw[] (12,0) node[right] {$\Ac_1=\{0,1\}$}; 
\draw[black,very thick,|-] (0,-.8) -- (4/3,-.8) node[midway,above] {$0$};
\draw[red,very thick,|-|] (4/3,-.8) -- (8/3,-.8) node[midway,above] {$1$};
\draw[black,very thick,-|] (8/3,-.8) -- (4,-.8) node[midway,above] {$2$}; 
\draw[black,very thick,-] (4,-.8) -- (16/3,-.8) node[midway,above] {$0$};
\draw[red,very thick,|-|] (16/3,-.8) -- (20/3,-.8) node[midway,above] {$1$};
\draw[black,very thick,-|] (20/3,-.8) -- (8,-.8) node[midway,above] {$2$};
\draw[] (12,-.8) node[right] {$\Ac_2=\{0,2\}$}; 
\draw[red,very thick,|-|] (0,-1.6) -- (4/9,-1.6) node[midway,above] {$0$};
\draw[black,very thick,-|] (4/9,-1.6) -- (8/9,-1.6) node[midway,above] {$1$};
\draw[black,very thick,-|] (8/9,-1.6) -- (4/3,-1.6) node[midway,above] {$2$};
\draw[red,very thick,|-|] (8/3,-1.6) -- (28/9,-1.6) node[midway,above] {$0$}; 
\draw[black,very thick,-|] (28/9,-1.6) -- (32/9,-1.6) node[midway,above] {$1$}; 
\draw[black,very thick,-] (32/9,-1.6) -- (4,-1.6) node[midway,above] {$2$}; 
\draw[red,very thick,|-|] (4,-1.6) -- (40/9,-1.6) node[midway,above] {$0$};
\draw[black,very thick,-|] (40/9,-1.6) -- (44/9,-1.6) node[midway,above] {$1$};
\draw[black,very thick,-|] (44/9,-1.6) -- (48/9,-1.6) node[midway,above] {$2$};
\draw[red,very thick,|-|] (60/9,-1.6) -- (64/9,-1.6) node[midway,above] {$0$}; 
\draw[black,very thick,-|] (64/9,-1.6) -- (68/9,-1.6) node[midway,above] {$1$}; 
\draw[black,very thick,-|] (68/9,-1.6) -- (8,-1.6) node[midway,above] {$2$};
\draw[] (12,-1.6) node[right] {$\Ac_3=\{1,2\}$}; 
\end{tikzpicture}
\caption{The initial three iterations and the alphabets used for a Cantor set in Ensemble II.}
\end{figure}

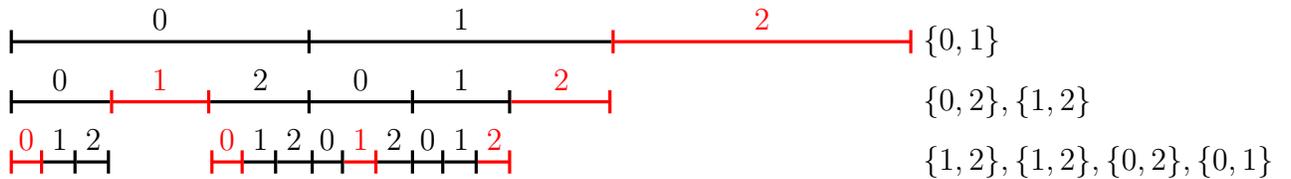
\begin{figure}[h!]
\centering
\begin{tikzpicture}
\draw[black,very thick,|-|] (0,0) -- (4,0) node[midway,above] {$0$};
\draw[black,very thick,-] (4,0) -- (8,0) node[midway,above] {$1$};
\draw[red,very thick,|-|] (8,0) -- (12,0) node[midway,above] {$2$};
\draw[] (12,0) node[right] {$\{0,1\}$}; 
\draw[black,very thick,|-] (0,-.8) -- (4/3,-.8) node[midway,above] {$0$};
\draw[red,very thick,|-|] (4/3,-.8) -- (8/3,-.8) node[midway,above] {$1$};
\draw[black,very thick,-|] (8/3,-.8) -- (4,-.8) node[midway,above] {$2$}; 
\draw[black,very thick,-] (4,-.8) -- (16/3,-.8) node[midway,above] {$0$};
\draw[black,very thick,|-|] (16/3,-.8) -- (20/3,-.8) node[midway,above] {$1$};
\draw[red,very thick,-|] (20/3,-.8) -- (8,-.8) node[midway,above] {$2$};
\draw[] (12,-.8) node[right] {$\{0,2\},\{1,2\}$}; 
\draw[red,very thick,|-|] (0,-1.6) -- (4/9,-1.6) node[midway,above] {$0$};
\draw[black,very thick,-|] (4/9,-1.6) -- (8/9,-1.6) node[midway,above] {$1$};
\draw[black,very thick,-|] (8/9,-1.6) -- (4/3,-1.6) node[midway,above] {$2$};
\draw[red,very thick,|-|] (8/3,-1.6) -- (28/9,-1.6) node[midway,above] {$0$}; 
\draw[black,very thick,-|] (28/9,-1.6) -- (32/9,-1.6) node[midway,above] {$1$}; 
\draw[black,very thick,-] (32/9,-1.6) -- (4,-1.6) node[midway,above] {$2$}; 
\draw[black,very thick,|-|] (4,-1.6) -- (40/9,-1.6) node[midway,above] {$0$};
\draw[red,very thick,-|] (40/9,-1.6) -- (44/9,-1.6) node[midway,above] {$1$};
\draw[black,very thick,-] (44/9,-1.6) -- (48/9,-1.6) node[midway,above] {$2$};
\draw[black,very thick,|-|] (48/9,-1.6) -- (52/9,-1.6) node[midway,above] {$0$}; 
\draw[black,very thick,-|] (52/9,-1.6) -- (56/9,-1.6) node[midway,above] {$1$}; 
\draw[red,very thick,-|] (56/9,-1.6) -- (60/9,-1.6) node[midway,above] {$2$};
\draw[] (12,-1.6) node[right] {$\{1,2\},\{1,2\},\{0,2\},\{0,1\}$}; 
\end{tikzpicture}
\caption{The initial three iterations and the alphabets used for a Cantor set in Ensemble III.}
\end{figure}

\begin{rmk}[The FUP for Cantor sets in the discrete setting of $\Z_N$]
Let $\Fc_N$ be the discrete Fourier transform which is unitary on $l^2(\Z_N)$. For each $j\in\N$, the discrete Cantor set $B_j\in\Z_N$. The FUP for Cantor sets in the discrete setting is concerned with the estimate of the form
\begin{equation}\label{eq:FUPd}
\left\|\Char_{B_j}\Fc_N\Char_{B_j}\right\|_{l^2(\Z_N)\to l^2(\Z_N)}=O\left(N^{-\beta}\right)\quad\text{as }j\to\infty,
\end{equation}
in which $N^{-1}=M^{-j}\to0$ plays the role of the semiclassical parameter. 

For the deterministic Cantor sets in Ensemble I, Dyatlov-Jin \cite{DJ1} introduced a FUP theory and proved the FUP \eqref{eq:FUPd} with exponent $\beta=\beta(M,\Ac)>\beta_\Vol$ for all $M,\Ac$. They also extensively studied the sharp exponent $\beta_s(M,\Ac)$ in the FUP for different $M,\Ac$. On one hand, they found examples of $M,\Ac$ for which the FUP holds with the exponent $\frac{1-\delta}{2}$, which is the best possible one. On the other hand, they also found examples of $M,\Ac$ for which the sharp exponent $\beta_s(M,\Ac)=\beta_\Vol+o_M(1)$.

For the random Cantor sets in Ensemble I, Eswarathasan-Han \cite{EH} introduced a probabilistic approach to the FUP and proved that for $0<\delta<\frac23$, $\beta_s(M,\Ac)\ge\frac12-\frac34\delta-\ve$ with overwhelming probability. The proof is similar to the one used in this paper, that is, it is based on establishing a Fourier decay estimate of the corresponding discrete Cantor measure $\tilde\nu_j=\sum_{b\in B_j}\delta_b$. However, the proof is simplified because of the submultiplicativity property in the discrete setting: If $j=j_1+j_2$, then
$$\left\|\Char_{B_j}\Fc_N\Char_{B_j}\right\|_{l^2(\Z_N)\to l^2(\Z_N)}\le\left\|\Char_{B_{j_1}}\Fc_{N_1}\Char_{B_{j_1}}\right\|_{l^2(\Z_{N_1})\to l^2(\Z_{N_1})}\left\|\Char_{B_{j_2}}\Fc_{N_2}\Char_{B_{j_2}}\right\|_{l^2(\Z_{N_2})\to l^2(\Z_{N_2})}$$
in which $N=N_1N_2$ with $N_1=M^{j_1}$ and $N_2=M^{j_2}$. Hence, the estimate of the FUP at the initial iteration $j=1$ implies one for all iterations $j\in\N$. Indeed, the Fourier transform of the discrete Cantor measure at the first iteration,
$$\Fc\tilde\nu_1(m),\quad\text{in which }\tilde\mu_1=\sum_{b\in B_1}\delta_b=\sum_{a\in\Ac}\delta_{\frac aM},$$ 
has decay for $m\in\Z_M\setminus\{0\}$ for randomly chosen $\Ac\in\Ab(M,A)$, which is sufficient to imply the FUP in the discrete setting.

For the deterministic Cantor sets in Ensembles II and III, one can prove an FUP with exponents in Theorem \ref{thm:FUPdeter}, following an approach of Dyatlov-Jin \cite{DJ2} to reduce the FUP for the discrete Fourier transform $\Fc_N$ to one for $\Fc_h$ with respect to the discrete measures $\tilde\nu_j$. Moreover, Dyatlov-Jin \cite{DJ1} still supplies examples of Cantor sets for which the FUP holds with the best possible exponent (by simply choosing the same alphabet at each iteration).

However, much less is known for the random Cantor sets in Ensembles II and III than Ensemble I. In particular, the submultiplicativity property is not necessarily true, since different alphabets can be used in different iterations. So one has to consider higher iterations when proving the FUP via the probabilistic approach. -- This is open.
\end{rmk}

\begin{rmk}[The FUP for Cantor sets in the continuous setting of $\R$]
All Cantor sets $\Cc$ in each ensemble above are $\delta$-regular with an absolute constant $R$ on scales $0$ to $1$ in Definition \ref{defn:reg}. Their $h$-neighborhood $\Cc(h)$ are $\delta$-regular on scales $h$ to $1$. As a consequence, the $h$-neighborhoods $\Cc(h)$ satisfy the FUP in Theorem \ref{thm:FUPdeter}. 

Our main results in Theorems \ref{thm:FUPL} and \ref{thm:FUPC} are concerned with the random Cantor sets in Ensemble III, which state that if $0<\delta<\frac23$, then the sharp constant $\beta_s\ge\frac12-\frac34\delta-\ve$ in the FUP with overwhelming probability. 

However, for the random Cantor sets in Ensembles I and II, much less is known. In particular, the Cantor measures in these ensembles do not necessarily have Fourier decay, that is, the Fourier dimension of some Cantor sets can be zero, see again Mattila \cite[Chapter 12]{M}. The probabilistic approach to the FUP in these ensembles is open.

Moreover, for the deterministic case in all the ensembles above, no examples of Cantor sets are known to satisfy the FUP with the best possible exponent $\frac{1-\delta}{2}$ in the continuous setting of $\R$ to the authors' knowledge.
\end{rmk}

\subsection{Organization of the paper}
In Section \ref{sec:prob}, we prepare the probabilisitic tools from concentration of measure theory for our proof of the main theorems. In Section \ref{sec:FD}, we prove the Fourier decay of the random Cantor measures. In Section \ref{sec:FUP}, we use the Fourier decay to establish the FUP for the random Cantor measures and random Cantor sets.

\section{Probabilistic estimates}\label{sec:prob}
Recall that $M,A\in\N$ with $M\ge3$ and $A=M^\delta$ with $0<\delta<1$, and that $\Ab(M,A)$ (equipped with the uniform counting probability measure $\mu_1$) is the probability space of alphabets of cardinality $A$ from the digits $\{0,...,M-1\}$. 

In this section, we establish the probabilistic estimates, which are used to prove the Fourier decay estimate of random Cantor measures, see Section \ref{sec:FD}. The following function \eqref{eq:B} appears naturally in such an estimate.

Suppose that $B$ is a finite set and $\Ac(b)\in\Ab(M,A)$ for $b\in B$ are independent and identical random variables. For $N\in\N$ and $\eta\in\R\setminus\{0\}$, define
\begin{equation}\label{eq:B}
\frac{1}{\Card(B)}\sum_{b\in B}e^{iN\eta b}F_\eta\left(\Ac(b)\right),
\end{equation}
in which
\begin{equation}\label{eq:F}
F_\eta(\Ac)=\frac1A\sum_{a\in\Ac}e^{-i\eta a}-\frac1M\sum_{a=0}^{M-1}e^{-i\eta a}.
\end{equation}
Hence, \eqref{eq:B} is a random variable with respect to the probability space $\Ab(M,A)^{\Card(B)}$ equipped with the uniform counting measure $\mu$. The main estimate in this section is
\begin{thm}\label{thm:P}
Let $N\in\N$ and $\eta\in\R\setminus\{0\}$. Then
$$\mu\left(\left|\frac{1}{\Card(B)}\sum_{b\in B}e^{iN\eta b}F_\eta\left(\Ac(b)\right)\right|\ge t\right)\le2\exp\left(-\frac{\Card(B)At^2}{64+\frac43At}\right)\quad\text{for all }t>0.$$
\end{thm}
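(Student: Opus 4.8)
The plan is to apply a Bernstein-type concentration inequality for sums of independent, mean-zero, bounded random variables. Fix $N \in \N$ and $\eta \in \R \setminus \{0\}$, and for each $b \in B$ set
$$
X_b = e^{iN\eta b} F_\eta\big(\Ac(b)\big).
$$
Since the $\Ac(b)$ are iid, the $X_b$ are independent (the deterministic phase $e^{iN\eta b}$ does not affect independence). The first key observation is that each $X_b$ has mean zero: averaging over $\Ac \in \Ab(M,A)$, each digit $a \in \{0,\dots,M-1\}$ appears in a uniformly random $\Ac$ with probability $A/M$, so $\Eb\big[\frac1A\sum_{a\in\Ac}e^{-i\eta a}\big] = \frac1M\sum_{a=0}^{M-1}e^{-i\eta a}$, whence $\Eb[F_\eta(\Ac)] = 0$ and $\Eb[X_b] = 0$. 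The second observation is a pointwise bound: by the triangle inequality $|F_\eta(\Ac)| \le \frac1A \cdot A \cdot 1 + \frac1M \cdot M \cdot 1 \le 2$, wait — more carefully, $\big|\frac1A\sum_{a\in\Ac}e^{-i\eta a}\big| \le 1$ and $\big|\frac1M\sum_{a=0}^{M-1}e^{-i\eta a}\big| \le 1$, so $|X_b| = |F_\eta(\Ac(b))| \le 2$ almost surely.

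Next I would estimate the variance. One computes $\Eb|X_b|^2 = \Eb|F_\eta(\Ac)|^2 = \mathrm{Var}\big(\frac1A\sum_{a\in\Ac}e^{-i\eta a}\big)$, which, since we are sampling $A$ digits without replacement from a population of $M$, is bounded by $\frac{1}{A}$ times the single-draw variance (the finite-population correction factor $\frac{M-A}{M-1}$ only helps), and the single-draw variance of $e^{-i\eta a}$ over uniform $a$ is at most $1$. So $\Eb|X_b|^2 \le \tfrac{C}{A}$ for an absolute constant; tracking constants carefully should give $\Eb|X_b|^2 \le \tfrac{1}{A}$ or at worst a small multiple. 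Then $\sum_{b\in B}\Eb|X_b|^2 \le \tfrac{\Card(B)}{A} \cdot (\text{const})$, which is the variance proxy $\sigma^2$ entering Bernstein.

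With these three ingredients — independence, mean zero, $|X_b| \le K := 2$, and $\sum_b \Eb|X_b|^2 \le \sigma^2$ with $\sigma^2 \approx \Card(B)/A$ — the (complex, or componentwise real/imaginary) Bernstein inequality gives, for the sum $S = \sum_{b\in B} X_b$,
$$
\mu\big(|S| \ge s\big) \le 2\exp\!\left(-\frac{s^2/2}{\sigma^2 + Ks/3}\right).
$$
Rescaling by $s = \Card(B)\, t$ and substituting $\sigma^2 \le c\,\Card(B)/A$, $K = 2$ yields a bound of the form $2\exp\big(-\tfrac{\Card(B)A t^2}{c' + c''At}\big)$; adjusting the absolute constants (and accounting for the factor-of-two loss from splitting into real and imaginary parts, which is where constants like $64$ and $\tfrac43$ come from) produces exactly the stated inequality.

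The main obstacle is the \textbf{dependence within a single $\Ac(b)$}: the digits of $\Ac(b)$ are not independent (they are a uniform size-$A$ subset, hence sampled without replacement), so the variance computation of $F_\eta(\Ac)$ requires the without-replacement machinery, and one must verify that the covariance terms, which are negative, do not spoil the clean $1/A$ scaling — in fact they improve it. A secondary technical point is finding the right form of Bernstein's inequality for complex-valued (or bounded, mean-zero) summands with the sharp constant; this is standard but one should either cite a reference or reduce to the real scalar case by bounding $|S| \le |\mathrm{Re}\, S| + |\mathrm{Im}\, S|$ and applying the classical Bernstein inequality to each, carefully absorbing the resulting constants into the claimed $64$ and $\tfrac43$.
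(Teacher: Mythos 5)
Your overall architecture matches the paper's: verify $\Eb[X_b]=0$, verify $|X_b|\le 2$, bound the variance $\Var[X_b]$, and feed these into a Bernstein-type inequality (the paper cites Bennett's version) at scale $n=\Card(B)$. The one place you take a genuinely different route is the variance estimate. The paper does \emph{not} compute $\Var[F_\eta]$ directly; instead it reuses the concentration-of-measure machinery from \cite{EH}: it bounds the Lipschitz norm $\|F_\eta\|_\Lip\le \frac1A$ with respect to the symmetric-difference metric on $\Ab(M,A)$, invokes Theorem~\ref{thm:comA} to get the sub-Gaussian tail $\mu_1(|F_\eta(\Ac)|\ge t)\le 2e^{-At^2/16}$ (Proposition~\ref{prop:comA}), and then integrates the tail to conclude $\Var[F_\eta]\le \frac{32}{A}$ (Corollary~\ref{cor:var}). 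Plugging $\Var[X_b]\le\frac{32}{A}$ and $|X_b|\le 2$ into Bennett's inequality produces exactly the constants $64$ and $\frac43$ in the statement. Your proposal instead computes the variance directly via the sampling-without-replacement formula: $\Var\bigl(\frac1A\sum_{a\in\Ac}e^{-i\eta a}\bigr)=\frac{\sigma^2}{A}\cdot\frac{M-A}{M-1}$ with population variance $\sigma^2=1-\bigl|\frac1M\sum_a e^{-i\eta a}\bigr|^2\le1$, giving the sharper bound $\Var[F_\eta]\le\frac1A$. This is correct, more elementary, and actually yields a better numerical constant than the paper's (it would replace $64$ by $2$), at the cost of not illustrating the concentration framework that the paper wants on record for later use. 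You also flag, correctly, that the cited Bernstein inequality is typically stated for real scalars and that handling the complex sum by splitting into real and imaginary parts costs only an absorbable constant; the paper applies Bennett's inequality with $|\cdot|$ directly and does not remark on this.
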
 

We estimate $F_\eta(\Ac)$ for $\Ac\in\Ab(M,A)$ and first observe that 
\begin{equation}\label{eq:Fmax}
\left|F_\eta(\Ac)\right|\le2\quad\text{for all }\Ac\in\Ab(M,A).
\end{equation}
We next show that the random variable $F_\eta(\Ac)$ for $\Ac\in\Ab(M,A)$ is concentrated near its expectation at an exponential rate, which therefore satisfies a much better estimate than the above trivial bound with overwhelming probability. It is a consequence of the concentration of measure theory in the metric space $\Ab(M,A)$ established by Eswarathasan-Han \cite[Section 2]{EH}. 

Set the metric in $\Ab(M,A)$ by
\begin{equation}\label{eq:dist}
d(\Ac_1,\Ac_2)=\Card\left(\Ac_1\triangle\Ac_2\right)=\Card(\Ac_1\setminus\Ac_2)+\Card(\Ac_2\setminus\Ac_1)\quad\text{for }\Ac_1,\Ac_2\in\Ab(M,A).
\end{equation}
Here, $\Ac_1\triangle\Ac_2$ denotes the symmetric difference. For a function $F:\Ab(M,A)\to\C$, its Lipschitz norm is defined by
$$\|F\|_\Lip=\max_{\Ac_1,\Ac_2\in\Ab(M,A),\Ac_1\ne\Ac_2}\frac{|F(\Ac_1)-F(\Ac_2)|}{d(\Ac_1,\Ac_2)}.$$
Under this setup, we have Eswarathasan-Han \cite[Theorem 2.1]{EH}:
\begin{thm}[Concentration of measure in the space of alphabets]\label{thm:comA}
Let $F:\Ab(M,A)\to\C$ and $t>0$. Then
$$\mu_1\left(\left\{\Ac\in\Ab(M,A):|F(\Ac)-\Eb[F]|\ge t\right\}\right)\le2\exp\left(-\frac{t^2}{16A\|F\|^2_\Lip}\right),$$
in which $\Eb[F]$ is the expectation of $F$ with respect to $\mu_1$.
\end{thm}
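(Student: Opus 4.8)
The plan is to prove Theorem~\ref{thm:comA} by the martingale (Azuma--Hoeffding) method, using that a $\mu_1$-random $A$-subset of $\{0,\dots,M-1\}$ can be generated by drawing its $A$ elements one at a time, without replacement. Concretely, I would fix a uniformly random injection $\sigma\colon\{1,\dots,A\}\to\{0,\dots,M-1\}$, so that $\Ac:=\{\sigma(1),\dots,\sigma(A)\}$ has law $\mu_1$, and introduce the Doob martingale
$$Z_k=\Eb\big[F(\Ac)\,\big|\,\sigma(1),\dots,\sigma(k)\big],\qquad k=0,1,\dots,A,$$
so that $Z_0=\Eb[F]$ and $Z_A=F(\Ac)$.

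The crux --- and what I expect to be the main obstacle --- is the increment bound $|Z_k-Z_{k-1}|\le 2\|F\|_\Lip$ for every $k$. This is \emph{not} the routine bounded-differences estimate available for product measures, since resampling the $k$-th drawn element forces a compensating change elsewhere to avoid a repeated value; the remedy is a coupling argument for sampling without replacement. Conditionally on $\sigma(1),\dots,\sigma(k-1)$, one couples the two continuations with $\sigma(k)=v$ and $\sigma(k)=v'$ so that the remaining $A-k$ drawn elements agree as much as possible, the unavoidable discrepancy amounting to a single swap; since two $A$-subsets differing by one swap lie at distance $2$ in the metric \eqref{eq:dist}, the value of $F$ changes by at most $2\|F\|_\Lip$ between them, which bounds $|Z_k-Z_{k-1}|$. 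This yields $\sum_{k=1}^A(2\|F\|_\Lip)^2=4A\|F\|_\Lip^2$.

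With the increments controlled, Azuma--Hoeffding applied to the real-valued martingale $\Re Z_k$ (whose increments satisfy $|\Re Z_k-\Re Z_{k-1}|\le|Z_k-Z_{k-1}|\le 2\|F\|_\Lip$) gives $\mu_1(|\Re F-\Eb\Re F|\ge s)\le 2\exp(-s^2/(8A\|F\|_\Lip^2))$, and likewise for $\Im F$. Combining the two via $|F-\Eb F|\ge t\Rightarrow\max(|\Re(F-\Eb F)|,|\Im(F-\Eb F)|)\ge t/\sqrt2$ produces the asserted inequality with $16A\|F\|_\Lip^2$ in the exponent; using a Hilbert-space-valued Azuma inequality in place of the coordinatewise split keeps the prefactor at $2$. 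Everything downstream of the increment estimate is routine, so the verification of $|Z_k-Z_{k-1}|\le 2\|F\|_\Lip$ through the without-replacement coupling is where the actual content lies.
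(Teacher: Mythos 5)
The paper does not contain a proof of Theorem~\ref{thm:comA}: it is imported verbatim from Eswarathasan--Han \cite[Theorem 2.1]{EH}, so there is no in-paper argument to compare your proposal against. That said, your proof is correct, and the route you take --- realize a $\mu_1$-random alphabet by drawing its $A$ elements one at a time without replacement, form the Doob martingale $Z_k=\Eb[F(\Ac)\mid\sigma(1),\dots,\sigma(k)]$, bound the increments by a swap coupling, and apply Azuma--Hoeffding --- is the standard argument for concentration on the slice $\binom{\{0,\dots,M-1\}}{A}$, and it is also the flavor of argument that \cite{EH} develops in its Section~2. You are right that the only non-routine step is the increment bound: conditioning on $\sigma(1),\dots,\sigma(k-1)$ and on two competing values $v,v'$ for $\sigma(k)$, the exchange coupling that swaps the roles of $v$ and $v'$ in the remaining draws produces two final alphabets at $d$-distance at most $2$ (distance $0$ if the other value is drawn later, distance $2$ otherwise), giving $|Z_k-Z_{k-1}|\le 2\|F\|_\Lip$ almost surely; this is exactly why the metric \eqref{eq:dist} counts symmetric differences. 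With $A$ steps, Azuma yields $2\exp\bigl(-t^2/(8A\|F\|_\Lip^2)\bigr)$ for real-valued $F$. Your accounting of the complex case is also accurate: the $\Re/\Im$ split with a union bound gives prefactor $4$ and denominator $16A\|F\|_\Lip^2$, which is adequate for everything downstream but formally weaker than the stated prefactor $2$; invoking the Hilbert-space Azuma inequality (Pinelis) restores prefactor $2$ and in fact improves the denominator to $8A\|F\|_\Lip^2$, which is strictly stronger than the statement. So the proposal proves the theorem (indeed a slightly sharper form), and the piece you flagged as the ``actual content'' --- the without-replacement coupling --- is handled correctly.
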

To apply Theorem \ref{thm:comA} to $F_\eta$ in \eqref{eq:F}, we compute that
\begin{eqnarray*}
\Eb\left[F_\eta\right]&=&\frac{1}{\Card\left(\Ab(M,A)\right)}\sum_{\Ac\in\Ab(M,A)}F_\eta(\Ac)\\
&=&{M\choose A}^{-1}\sum_{\Ac\in\Ab(M,A)}\left(\frac1A\sum_{a\in\Ac(b)}e^{-i\eta a}-\frac1M\sum_{a=0}^{M-1}e^{-i\eta a}\right)\\
&=&\frac1A{M\choose A}^{-1}\sum_{a=0}^{M-1}\sum_{\Ac\in\Ab(M,A),\Ac\ni a}e^{-i\eta a}-\frac1M\sum_{a=0}^{M-1}e^{-i\eta a}\\
&=&\frac1A{M\choose A}^{-1}{M-1\choose A-1}\sum_{a=0}^{M-1}e^{-i\eta a}-\frac1M\sum_{a=0}^{M-1}e^{-i\eta a}\\
&=&\frac1M\sum_{a=0}^{M-1}e^{-i\eta a}-\frac1M\sum_{a=0}^{M-1}e^{-i\eta a}\\
&=&0.
\end{eqnarray*}
Here, we used the fact that for any fixed digit $a=0,...,M-1$, the number of alphabets $\Ac$ in $\Ab(M,A)$ which contain $a$ is ${M-1\choose A-1}$. 

To estimate the Lipschitz norm of $F_\eta$ in \eqref{eq:F}, let $\Ac_1,\Ac_2\in\Ab(M,A)$. Then
\begin{eqnarray*}
\left|F_\eta\left(\Ac_1\right)-F_\eta\left(\Ac_2\right)\right|&=&\frac1A\left|\sum_{a\in\Ac_1}e^{-i\eta a}-\sum_{a\in\Ac_2}e^{-i\eta a}\right|\\
&=&\frac1A\left|\sum_{a\in\Ac_1\triangle\Ac_2}e^{-i\eta a}\right|\\
&\le&\frac1A\cdot\Card(\Ac_1\triangle\Ac_2)\\
&=&\frac1A\cdot d(\Ac_1,\Ac_2),
\end{eqnarray*}
in the view of the metric \eqref{eq:dist}. Hence, $\|F\|_\Lip\le\frac1A$. By Theorem \ref{thm:comA}, we therefore have the following proposition.
\begin{prop}\label{prop:comA}
Let $\eta\in\R\setminus\{0\}$ and $t>0$. Then
$$\mu_1\left(\left\{\Ac\in\Ab(M,A):\left|F_\eta(\Ac)\right|\ge t\right\}\right)\le2\exp\left(-\frac{At^2}{16}\right).$$
\end{prop}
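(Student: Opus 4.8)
The plan is to obtain Proposition \ref{prop:comA} as a direct application of the concentration-of-measure inequality on the metric space $\Ab(M,A)$, namely Theorem \ref{thm:comA}, to the specific function $F_\eta$ from \eqref{eq:F}. Theorem \ref{thm:comA} controls the deviation of $F_\eta$ from its mean $\Eb[F_\eta]$ by a Gaussian tail governed by the Lipschitz constant $\|F_\eta\|_\Lip$ with respect to the symmetric-difference metric \eqref{eq:dist}. So the two inputs I need to pin down are the expectation and the Lipschitz norm of $F_\eta$, after which the estimate is purely a matter of substitution.

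First I would compute $\Eb[F_\eta]$ with respect to $\mu_1$. Averaging $\frac1A\sum_{a\in\Ac}e^{-i\eta a}$ over all $\Ac\in\Ab(M,A)$, I exchange the order of summation and use the combinatorial fact that each fixed digit $a\in\{0,\dots,M-1\}$ belongs to exactly $\binom{M-1}{A-1}$ of the $\binom{M}{A}$ alphabets; since $\frac1A\binom{M}{A}^{-1}\binom{M-1}{A-1}=\frac1M$, the average of the first term of $F_\eta$ equals $\frac1M\sum_{a=0}^{M-1}e^{-i\eta a}$, which is exactly the quantity subtracted off in the definition \eqref{eq:F}. Hence $\Eb[F_\eta]=0$; the function $F_\eta$ is mean-zero by construction, which is precisely why this centering term was built into \eqref{eq:F}.

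Next I would bound $\|F_\eta\|_\Lip$. For $\Ac_1,\Ac_2\in\Ab(M,A)$, the subtracted constant cancels, so $F_\eta(\Ac_1)-F_\eta(\Ac_2)=\frac1A\big(\sum_{a\in\Ac_1}e^{-i\eta a}-\sum_{a\in\Ac_2}e^{-i\eta a}\big)$; since $\Card(\Ac_1)=\Card(\Ac_2)=A$, the shared digits cancel and this difference is a sum of unimodular terms indexed by $\Ac_1\triangle\Ac_2$, whence $|F_\eta(\Ac_1)-F_\eta(\Ac_2)|\le\frac1A\Card(\Ac_1\triangle\Ac_2)=\frac1A d(\Ac_1,\Ac_2)$. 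Therefore $\|F_\eta\|_\Lip\le\frac1A$. Substituting $\Eb[F_\eta]=0$ and $\|F_\eta\|_\Lip\le\frac1A$ into Theorem \ref{thm:comA} gives
$$\mu_1\big(\{\Ac\in\Ab(M,A):|F_\eta(\Ac)|\ge t\}\big)\le 2\exp\!\Big(-\frac{t^2}{16A\cdot A^{-2}}\Big)=2\exp\!\Big(-\frac{At^2}{16}\Big),$$
which is the claimed bound.

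There is no genuine obstacle at this stage: the real probabilistic content (the log-Sobolev/concentration estimate for the uniform measure on alphabets) lives entirely inside Theorem \ref{thm:comA}, imported from \cite{EH}. The only points that require a moment's care are the combinatorial identity $\binom{M-1}{A-1}/\binom{M}{A}=A/M$ used to kill the expectation, and the observation that the equal-cardinality constraint collapses the difference of the two alphabet sums onto the symmetric difference, yielding the clean $1/A$ Lipschitz bound; these two facts together are exactly what make the sharp exponent $At^2/16$ emerge.
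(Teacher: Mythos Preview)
Your proposal is correct and follows essentially the same approach as the paper: compute $\Eb[F_\eta]=0$ via the combinatorial identity $\binom{M-1}{A-1}/\binom{M}{A}=A/M$, bound $\|F_\eta\|_\Lip\le 1/A$ by collapsing the difference onto the symmetric difference, and then substitute into Theorem~\ref{thm:comA}. The paper presents these two computations in the text immediately preceding the proposition, so your argument matches it line for line.
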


An immediate consequence is an estimate on the variance $\Var[F_\eta]$ of $F_\eta$:
\begin{cor}\label{cor:var}
Let $\eta\in\R\setminus\{0\}$. Then
$$\Var\left[F_\eta\right]\le\frac{32}{A}.$$
\end{cor}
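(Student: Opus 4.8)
The goal is to bound $\Var[F_\eta]$ by integrating the tail estimate from Proposition \ref{prop:comA}. The plan is to use the standard layer-cake identity $\Var[F_\eta] = \Eb[|F_\eta - \Eb[F_\eta]|^2] = \Eb[|F_\eta|^2]$, where the last equality uses the computation $\Eb[F_\eta] = 0$ established above. Since $F_\eta$ is complex-valued, I would write $\Eb[|F_\eta|^2] = \int_0^\infty \mu_1(|F_\eta|^2 \ge s)\,ds = \int_0^\infty 2t\,\mu_1(|F_\eta| \ge t)\,dt$ after the substitution $s = t^2$.

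Next I would plug in the bound from Proposition \ref{prop:comA}, namely $\mu_1(|F_\eta| \ge t) \le 2\exp(-At^2/16)$, giving
$$\Var[F_\eta] \le \int_0^\infty 2t \cdot 2\exp\left(-\frac{At^2}{16}\right)\,dt = 4 \cdot \frac{16}{2A} = \frac{32}{A},$$
using the elementary integral $\int_0^\infty t\,e^{-ct^2}\,dt = \frac{1}{2c}$ with $c = A/16$. This is a direct computation with no real obstacle.

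One small technical point worth noting: the probability bound of $2\exp(-At^2/16)$ exceeds $1$ for small $t$, so the tail integrand is only useful once $t$ is past a threshold; however, since we are seeking an upper bound and the integrand $2t\exp(-At^2/16)$ is being compared against the true tail which is always $\le 1$, replacing the true tail by the (possibly larger than $1$) exponential expression only increases the integral, so the inequality $\Var[F_\eta] \le \int_0^\infty 4t\exp(-At^2/16)\,dt$ remains valid. I would remark on this briefly to keep the argument rigorous. The main ``obstacle'' — really just bookkeeping — is confirming that this substitution and bound are applied correctly in the complex-valued setting, which is routine since $|F_\eta|$ is a nonnegative real random variable.
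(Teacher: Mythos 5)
Your proof is correct and follows exactly the same route as the paper: use $\Eb[F_\eta]=0$ to write $\Var[F_\eta]=\Eb[|F_\eta|^2]$, apply the layer-cake formula $\Eb[|F_\eta|^2]=2\int_0^\infty t\,\mu_1(|F_\eta|\ge t)\,dt$, and then substitute the Gaussian tail bound from Proposition~\ref{prop:comA} and evaluate the elementary integral. The extra remark about the tail bound exceeding $1$ for small $t$ is a harmless (and correct) clarification that the paper omits.
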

\begin{proof}
Since $\Eb[F_\eta]=0$, compute that
\begin{eqnarray*}
\Var\left[F_\eta\right]&=&\Eb\left[\left|F_\eta\right|^2\right]\\
&=&2\int_0^\infty t\cdot\mu_1\left(\left\{\Ac\in\Ab(M,A):\left|F_\eta(\Ac)\right|\ge t\right\}\right)\,dt\\
&\le&4\int_0^\infty te^{-\frac{At^2}{16}}\,dt\\
&=&\frac{32}{A}.
\end{eqnarray*}
\end{proof}

We now prove Theorem \ref{thm:P} and need the following version of Bernstein inequality from Bennett \cite{B}.
\begin{thm}[Bernstein inequality]
Let $X_1,...,X_n$ be independent and identical random variables with expectation zero. Suppose that for some $C>0$, $|X_j|\le C$ for all $j=1,...,n$. Then for $t>0$,
$$P\left(\left|\frac1n\sum_{j=1}^nX_j\right|\ge t\right)\le2\exp\left(-\frac{nt^2}{\frac2n\sum_{j=1}^n\Var\left[X_j^2\right]+\frac23Ct}\right).$$
\end{thm}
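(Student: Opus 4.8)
The stated inequality is the classical Bernstein inequality (in the form of Bennett \cite{B}), and I would establish it by the exponential moment, or Chernoff, method. Write $S_n=\sum_{j=1}^nX_j$ and, since $\Eb[X_j]=0$, put $\sigma_j^2:=\Var[X_j]=\Eb[X_j^2]$ (the per-summand variances appearing in the denominator of the stated bound) and $\Sigma^2:=\sum_{j=1}^n\sigma_j^2$. The first step is standard: for every $\lambda>0$, Markov's inequality applied to the nonnegative random variable $e^{\lambda S_n}$, combined with independence, yields
\[
P\!\left(S_n\ge nt\right)\le e^{-\lambda nt}\,\Eb\!\left[e^{\lambda S_n}\right]=e^{-\lambda nt}\prod_{j=1}^n\Eb\!\left[e^{\lambda X_j}\right].
\]

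The second step is to control each moment generating factor. Expanding the exponential, using $\Eb[X_j]=0$, and invoking the moment bound $\left|\Eb[X_j^k]\right|\le\Eb\!\left[|X_j|^k\right]\le C^{k-2}\Eb[X_j^2]=C^{k-2}\sigma_j^2$ for $k\ge2$ (which follows from $|X_j|\le C$), one gets
\[
\Eb\!\left[e^{\lambda X_j}\right]\le1+\sigma_j^2\sum_{k\ge2}\frac{\lambda^k C^{k-2}}{k!}=1+\frac{\sigma_j^2}{C^2}\left(e^{\lambda C}-1-\lambda C\right)\le\exp\!\left(\frac{\sigma_j^2}{C^2}\left(e^{\lambda C}-1-\lambda C\right)\right),
\]
the last step by $1+x\le e^x$. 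Multiplying over $j$ and substituting into the Chernoff bound gives Bennett's inequality $P(S_n\ge nt)\le\exp\!\big(-\lambda nt+C^{-2}\Sigma^2(e^{\lambda C}-1-\lambda C)\big)$ for all $\lambda>0$.

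The third step converts this into the stated form. Rather than optimizing $\lambda$ exactly (which produces Bennett's function $h(u)=(1+u)\log(1+u)-u$), I would apply the elementary bound $e^x-1-x\le\frac{x^2/2}{1-x/3}$, valid for $0\le x<3$, to obtain $P(S_n\ge nt)\le\exp\!\big(-\lambda nt+\frac{\Sigma^2\lambda^2/2}{1-\lambda C/3}\big)$ for $0<\lambda<3/C$, and then take $\lambda=\dfrac{nt}{\Sigma^2+\frac13 ntC}\in(0,3/C)$. A short computation collapses the exponent to $-\dfrac{n^2t^2}{2\Sigma^2+\frac23 ntC}=-\dfrac{nt^2}{\frac2n\Sigma^2+\frac23 Ct}$, which is exactly the exponent in the statement. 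Applying the same argument to $-X_1,\dots,-X_n$ bounds $P(S_n\le-nt)$ by the identical quantity, and a union bound supplies the factor $2$. For the complex-valued random variables to which this is applied in Theorem~\ref{thm:P}, one reduces to the real case by treating $\mathrm{Re}\,X_j$ and $\mathrm{Im}\,X_j$ separately — each has mean zero, modulus at most $C$, and variances summing to $\Eb[|X_j|^2]$ — which costs only harmless numerical factors.

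The argument is conceptually routine. The one place where care is genuinely needed is the constant bookkeeping in the third step: to land on the denominator with precisely the coefficients $\frac2n$ and $\frac23 C$, rather than with some larger universal constant, one must use exactly the inequality $e^x-1-x\le\frac{x^2/2}{1-x/3}$ and the stated choice of $\lambda$. This is the part I would double-check carefully, but it is a calculation rather than a real obstacle.
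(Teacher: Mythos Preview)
Your proof is the standard Chernoff--Bennett argument and is correct; the moment-generating-function bound, the elementary inequality $e^x-1-x\le\frac{x^2/2}{1-x/3}$ (which follows termwise from $k!\ge2\cdot3^{k-2}$ for $k\ge2$), and the choice of $\lambda$ all check out and produce exactly the stated exponent.

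Note, however, that the paper does \emph{not} prove this theorem at all: it is simply quoted from Bennett~\cite{B} as a black box and then applied in the proof of Theorem~\ref{thm:P}. So there is no ``paper's own proof'' to compare against; you have supplied a proof where the authors were content to cite. Two minor remarks: (i) the paper's statement writes $\Var[X_j^2]$, which you have correctly read as $\Var[X_j]=\Eb[X_j^2]$ (a typo in the original); (ii) your argument nowhere uses that the $X_j$ are identically distributed, only independence and the common bound $|X_j|\le C$, so you in fact prove the slightly more general version---which is also the version in Bennett's paper.
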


\begin{proof}[Proof of Theorem \ref{thm:P}]
We apply Bernstein inequality with $n=\Card(B)$ and
$$X(b)=e^{iN\eta b}F_\eta\left(\Ac(b)\right),$$
for which $X(b)\le2$ from \eqref{eq:Fmax} for all $b\in B$. Moreover, 
$$\Eb[X(b)]=0\quad\text{and}\quad\Var[X(b)]\le\frac{32}{A}.$$
Hence,
$$\mu\left(\left|\frac{1}{\Card(B)}\sum_{b\in B}e^{i\eta b}F_\eta\left(\Ac(b)\right)\right|\ge t\right)\le2\exp\left(-\frac{\Card(B)t^2}{\frac{64}{A}+\frac43t}\right)=2\exp\left(-\frac{\Card(B)At^2}{64+\frac43At}\right).$$
\end{proof}

\section{Fourier decay of random Cantor measures}\label{sec:FD}
We derive the Fourier transform of the Cantor measure $\nu$ supported on the Cantor set $\Cc$, see \eqref{eq:Cc}. Let $\xi\in\R\setminus\{0\}$. Then 
$$\Fc\nu(\xi)=\lim_{j\to\infty}\Fc\nu_j(\xi)=\frac{1}{\sqrt{2\pi}}\lim_{j\to\infty}\int_\R e^{-ix\xi}\rho_j(x)\,dx.$$
Here, the density function $\rho_j$ of the measure $\nu_j$ assigns a measure of $\frac{M^j}{A^j}$ to each of the $A^j$ intervals which defines $\Cc_j$. Hence,
\begin{eqnarray*}
\Fc\nu_j(\xi)&=&\frac{1}{\sqrt{2\pi}}\sum_{b\in B_j}\frac{M^j}{A^j}\int_b^{b+\frac{1}{M^j}}e^{-ix\xi}\,dx\\
&=&\frac{M^j}{\sqrt{2\pi}A^j}\sum_{b\in B_j}\frac{e^{-i\left(b+\frac{1}{M^j}\right)\xi}-e^{-ib\xi}}{-i\xi}\\
&=&\frac{i\left(e^{-i\xi/M^j}-1\right)}{\sqrt{2\pi}\xi/M^j}\cdot\left(\frac{1}{A^j}\sum_{b\in B_j}e^{-i\xi b}\right).
\end{eqnarray*}
Since $\Card(B_j)=A^j$,
$$\left|\Fc\nu_j(\xi)\right|\le\left|\frac{i\left(e^{-i\xi/M^j}-1\right)}{\sqrt{2\pi}\xi/M^j}\cdot\left(\frac{1}{A^j}\sum_{b\in B_j}e^{-i\xi b}\right)\right|\le\left|\frac{i\left(e^{-i\xi/M^j}-1\right)}{\sqrt{2\pi}\xi/M^j}\right|\le C\cdot\min\left\{1,\frac{M^j}{|\xi|}\right\}.$$
in which $C>0$ is an absolute constant, that is, it is independent of $M,A,\xi,j$. Here, we used the fact that
$$\frac{\left(e^{-i\eta}-1\right)}{\eta}=O(1)\text{ as }\eta\to0,\quad\text{and}\quad\frac{\left(e^{-i\eta}-1\right)}{\eta}=O\left(|\eta|^{-1}\right)\text{ as }|\eta|\to\infty.$$
This estimate of $\Fc\nu_j$ is not strong enough to derive a Fourier decay of $\nu$. To do so, we next compare the Fourier transforms of $\Cc_j$ for consecutive $j$'s, which requires a different way of computing $\Fc\nu_j$ as above. Let $j\ge2$. Then
\begin{eqnarray*}
\Fc\nu_{j-1}(\xi)&=&\frac{1}{\sqrt{2\pi}}\sum_{b\in B_{j-1}}\frac{M^{j-1}}{A^{j-1}}\int_b^{b+\frac{1}{M^{j-1}}}e^{-ix\xi}\,dx\\
&=&\frac{1}{\sqrt{2\pi}}\sum_{b\in B_{j-1}}\frac{M^{j-1}}{A^{j-1}}\sum_{a=0}^{M-1}\int_{b+\frac{a}{M^j}}^{b+\frac{a+1}{M^j}}e^{-ix\xi}\,dx\\
&=&\frac{1}{\sqrt{2\pi}}\sum_{b\in B_{j-1}}\frac{M^{j-1}}{A^{j-1}}\sum_{a=0}^{M-1}\frac{e^{-i\left(b+\frac{a+1}{M^j}\right)\xi}-e^{-i\left(b+\frac{a}{M^j}\right)\xi}}{-i\xi}\\
&=&\frac{i\left(e^{-i\xi/M^j}-1\right)}{\sqrt{2\pi}\xi/M^j}\cdot\left(\frac{1}{A^{j-1}}\sum_{b\in B_{j-1}}e^{-i\xi b}\cdot\frac1M\sum_{a=0}^{M-1}e^{-i\xi a/M^j}\right).
\end{eqnarray*}
Similarly, 
\begin{eqnarray*}
\Fc\nu_j(\xi)&=&\frac{1}{\sqrt{2\pi}}\sum_{b\in B_j}\frac{M^j}{A^j}\int_b^{b+\frac{1}{M^j}}e^{-ix\xi}\,dx\\
&=&\frac{1}{\sqrt{2\pi}}\sum_{b\in B_{j-1}}\frac{M^j}{A^j}\sum_{a\in\Ac(b)}\int_{b+\frac{a}{M^j}}^{b+\frac{a+1}{M^j}}e^{-ix\xi}\,dx\\
&=&\frac{1}{\sqrt{2\pi}}\sum_{b\in B_{j-1}}\frac{M^j}{A^j}\sum_{a\in\Ac(b)}\frac{e^{-i\left(b+\frac{a+1}{M^j}\right)\xi}-e^{-i\left(b+\frac{a}{M^j}\right)\xi}}{-i\xi}\\
&=&\frac{i\left(e^{-i\xi/M^j}-1\right)}{\sqrt{2\pi}\xi/M^j}\cdot\left(\frac{1}{A^{j-1}}\sum_{b\in B_{j-1}}e^{-i\xi b}\cdot\frac1A\sum_{a\in\Ac(b)}e^{-i\xi a/M^j}\right).
\end{eqnarray*}
Denoting $\eta=\xi/M^j$, we have that
\begin{eqnarray*}
&&\Fc\nu_j(\xi)-\Fc\nu_{j-1}(\xi)\\
&=&\frac{i\left(e^{-i\xi/M^j}-1\right)}{\sqrt{2\pi}\xi/M^j}\cdot\left[\frac{1}{A^{j-1}}\sum_{b\in B_{j-1}}e^{-i\xi b}\left(\frac1A\sum_{a\in\Ac(b)}e^{-i\xi a/M^j}-\frac1M\sum_{a=0}^{M-1}e^{-i\xi a/M^j}\right)\right]\\
&=&\frac{i\left(e^{-i\eta}-1\right)}{\sqrt{2\pi}\eta}\cdot\left[\frac{1}{A^{j-1}}\sum_{b\in B_{j-1}}e^{-iM^j\eta b}\left(\frac1A\sum_{a\in\Ac(b)}e^{-i\eta a}-\frac1M\sum_{a=0}^{M-1}e^{-i\eta a}\right)\right]\\
&=&\frac{i\left(e^{-i\eta}-1\right)}{\sqrt{2\pi}\eta}\cdot\left[\frac{1}{A^{j-1}}\sum_{b\in B_{j-1}}e^{-iM^j\eta b}F_\eta\left(\Ac(b)\right)\right]\\
&:=&G(\eta).
\end{eqnarray*}
We use Theorem \ref{thm:P} to estimate $G(\eta)$. Firstly, compute that
\begin{eqnarray*}
G'(\eta)&=&\frac{\eta e^{-i\eta}-i\left(e^{-i\eta}-1\right)}{\sqrt{2\pi}\eta^2}\cdot\left[\frac{1}{A^{j-1}}\sum_{b\in B_{j-1}}e^{-iM^j\eta b}F_\eta\left(\Ac(b)\right)\right]\\
&&+\frac{i\left(e^{-i\eta}-1\right)}{\sqrt{2\pi}\eta}\cdot\left[\frac{1}{A^{j-1}}\sum_{b\in B_{j-1}}e^{-iM^j\eta b}\left(-iM^jbF_\eta\left(\Ac(b)\right)+\partial_\eta F_\eta\left(\Ac(b)\right)\right)\right].
\end{eqnarray*}
Since $a\in\{0,1,...,M-1\}$,
$$\left|\partial_\eta F_\eta\left(\Ac(b)\right)\right|=\left|\frac1A\sum_{a\in\Ac(b)}\left(-iae^{-i\eta a}\right)-\frac1M\sum_{a=0}^{M-1}\left(-iae^{-i\eta a}\right)\right|\le M.$$
Since $|F_\eta(\Ac)|\le2$ by \eqref{eq:Fmax} and $B_{j-1}\in[0,1]$ with $\Card(B_{j-1})=A^{j-1}$, we have that
\begin{eqnarray*}
\left|G'(\eta)\right|&\le&\left|\frac{\eta e^{-i\eta}-i\left(e^{-i\eta}-1\right)}{\sqrt{2\pi}\eta^2}\right|\cdot\left|\frac{1}{A^{j-1}}\sum_{b\in B_{j-1}}e^{-iM^j\eta b}F_\eta\left(\Ac(b)\right)\right|\\
&&+\left|\frac{i\left(e^{-i\eta}-1\right)}{\sqrt{2\pi}\eta}\right|\cdot\left|\frac{1}{A^{j-1}}\sum_{b\in B_{j-1}}e^{-iM^j\eta b}\left(-iM^jbF_\eta\left(\Ac(b)\right)+\partial_\eta F_\eta\left(\Ac(b)\right)\right)\right|\\
&\le&C\cdot\min\left\{1,|\eta|^{-1}\right\}+C\min\left\{1,|\eta|^{-1}\right\}\cdot\left(M^j+M\right)\\
&\le&C\cdot\min\left\{1,|\eta|^{-1}\right\}\cdot M^j,
\end{eqnarray*}
in which $C>0$ is an absolute constant. Here, we used the fact that
$$\frac{\eta e^{-i\eta}-i\left(e^{-i\eta}-1\right)}{\eta^2}=O(1)\text{ as }\eta\to0,\quad\text{and}\quad\frac{\eta e^{-i\eta}-i\left(e^{-i\eta}-1\right)}{\eta^2}=O\left(|\eta|^{-1}\right)\text{ as }|\eta|\to\infty.$$
Next we move on to estimating the following term in $G(\eta)$:
$$\frac{1}{A^{j-1}}\sum_{b\in B_{j-1}}e^{-iM^j\eta b}F_\eta\left(\Ac(b)\right)=\frac{1}{A^{j-1}}\sum_{b\in B_{j-1}}e^{-iM^j\eta b}\left(\frac1A\sum_{a\in\Ac(b)}e^{-i\eta a}-\frac1M\sum_{a=0}^{M-1}e^{-i\eta a}\right),$$
which has a period of $2\pi$. Thus, it suffices to consider this term only in the case when $\eta\in(0,2\pi]$. Let 
$$0<L\le A^{\frac j2-1}.$$
Set $K$ as the smallest integer such that
$$l:=\frac{2\pi}{K}\le\frac{\sqrt{2\pi}L}{A^{\frac j2}M^j}.$$
Then
$$\left\lfloor\frac{\sqrt{2\pi}A^{\frac j2}M^j}{L}\right\rfloor\le K\le\left\lfloor\frac{\sqrt{2\pi}A^{\frac j2}M^j}{L}\right\rfloor+1.$$
Divide $(0,2\pi]$ into sub-intervals of equal length $l$. Denote the boundary points of these sub-intervals (except $0$) by
$$\eta_k=kl,\quad\text{in which }k=1,2,...,K.$$
We are now ready to apply Theorem \ref{thm:P} to $\eta_k$. Here, the probability space $\Ab(M,A)^{A^{j-1}}$ for $\Cc_j$ is equipped with the probability measure $\mu_j$. Take $t=LA^{-\frac j2}$. Then there is $\Omega_k\subset\Ab(M,A)^{A^{j-1}}$ (depending on $\eta_k$) with
$$\mu_j\left(\Ab(M,A)^{A^{j-1}}\setminus\Omega_k\right)\le2\exp\left(-\frac{A^{j-1}\cdot A\left(LA^{-\frac j2}\right)^2}{64+\frac43A\left(LA^{-\frac j2}\right)}\right)\le2\exp\left(-\frac{L^2}{66}\right)$$
such that for all $\Cc_j$ in $\Omega_k$,
$$\left|\frac{1}{A^{j-1}}\sum_{b\in B_{j-1}}e^{-iM^j\eta_k b}F_\eta\left(\Ac(b)\right)\right|\le LA^{-\frac j2}.$$
Write
\begin{equation}\label{eq:Gj}
\Gb_j=\bigcup_{k=1}^K\Omega_k.
\end{equation}
Then for some absolute constant $C>0$,
\begin{eqnarray*}
\mu_j\left(\Ab(M,A)^{A^{j-1}}\setminus\Gb_j\right)&\le&\sum_{k=1}^K\mu_j\left(\Ab(M,A)^{A^{j-1}}\setminus\Omega_k\right)\\
&\le&2K\exp\left(-\frac{L^2}{66}\right)\\
&\le&C\cdot\frac{A^{\frac j2}M^j}{L}\cdot\exp\left(-\frac{L^2}{C}\right),
\end{eqnarray*}
and for all $\Cc_j$ in $\Gb_j$,
$$\left|\frac{1}{A^{j-1}}\sum_{b\in B_{j-1}}e^{-iM^j\eta_k b}F_\eta\left(\Ac(b)\right)\right|\le LA^{-\frac j2}\quad\text{for all }k=1,...,K.$$
Hence, under the same conditions,
$$\left|G\left(\eta_k\right)\right|=\left|\frac{i\left(e^{-i\eta_k}-1\right)}{\sqrt{2\pi}\eta_k}\right|\cdot\left|\frac{1}{A^{j-1}}\sum_{b\in B_{j-1}}e^{-iM^j\eta b}F_\eta\left(\Ac(b)\right)\right|\le C\cdot\min\left\{1,\left|\eta_k\right|^{-1}\right\}\cdot LA^{-\frac j2}.$$
To pass the estimate for $\eta_k$, $k=1,...,K$, to the one for all $\eta\in(0,2\pi]$, we use the mean value theorem of $G(\eta)$. That is, for each $\eta\in(0,2\pi]$, there is $\eta_k$ such that $\eta_{k-1}=\eta_k-l<\eta\le\eta_k$. Thus,
\begin{eqnarray*}
\left|G\left(\eta_k\right)-G(\eta)\right|&\le&\sup_{\zeta\in\left(\eta,\eta_k\right)}|G'(\zeta)|\cdot l\\
&\le&C\cdot\min\left\{1,|\eta|^{-1}\right\}\cdot M^j\cdot\frac{\sqrt{2\pi}L}{A^{\frac j2}M^j}\\
&\le&C\cdot\min\left\{1,|\eta|^{-1}\right\}\cdot LA^{-\frac j2}.
\end{eqnarray*}
Finally,
$$|G(\eta)|\le\left|G\left(\eta_k\right)-G(\eta)\right|+\left|G\left(\eta_k\right)\right|\le C\cdot\min\left\{1,|\eta|^{-1}\right\}\cdot LA^{-\frac j2}.$$
Recall that $\eta=\xi/M^j$. We have established that
\begin{prop}
Let $j\ge2$ and $0<L\le A^{\frac j2-1}$. Then there is $\Gb_j\subset\Ab(M,A)^{A^{j-1}}$ with
$$\mu_j\left(\Ab(M,A)^{A^{j-1}}\setminus\Gb_j\right)\le C\cdot\frac{A^{\frac j2}M^j}{L}\cdot\exp\left(-\frac{L^2}{C}\right)$$
such that for all $\Cc_j$ in $\Gb_j$ (that is, $\{\Ac(b),b\in B_{j-1}\}$ are from $\Gb_j$),
\begin{equation}\label{eq:Fnu}
\left|\Fc\nu_j(\xi)-\Fc\nu_{j-1}(\xi)\right|\le C\cdot\min\left\{1,\frac{M^j}{|\xi|}\right\}\cdot LA^{-\frac j2}\quad\text{for all }\xi\in\R\setminus\{0\}.
\end{equation}
Here, $C>0$ is an absolute constant.
\end{prop}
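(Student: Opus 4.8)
The plan is to package the identity and estimates already derived above into the two asserted bounds. With $\eta=\xi/M^j$, the computation preceding the statement gives
$$\Fc\nu_j(\xi)-\Fc\nu_{j-1}(\xi)=G(\eta)=\frac{i\left(e^{-i\eta}-1\right)}{\sqrt{2\pi}\,\eta}\cdot S(\eta),\qquad S(\eta):=\frac{1}{A^{j-1}}\sum_{b\in B_{j-1}}e^{-iM^j\eta b}F_\eta\left(\Ac(b)\right),$$
and, as recorded above, the prefactor satisfies $\left|\tfrac{i(e^{-i\eta}-1)}{\sqrt{2\pi}\eta}\right|\le C\min\{1,|\eta|^{-1}\}$ for every $\eta\neq0$. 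So it suffices to exhibit a set $\Gb_j\subset\Ab(M,A)^{A^{j-1}}$ of the prescribed small complementary $\mu_j$-measure on which $|S(\eta)|\le CLA^{-j/2}$ for all $\eta$; since $\min\{1,|\eta|^{-1}\}=\min\{1,M^j/|\xi|\}$, multiplying by the prefactor then yields \eqref{eq:Fnu}.

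First I would note that $S$ is $2\pi$-periodic, because $M^jb\in\Z$ for every $b\in B_{j-1}$ and the digits $a$ are integers; hence it is enough to control $S$ at a finite net in $(0,2\pi]$ and then translate net points by multiples of $2\pi$. Then I would discretize: take $K$ minimal with mesh $l=2\pi/K\le\sqrt{2\pi}L/(A^{j/2}M^j)$, so that $K\le CA^{j/2}M^j/L$, and set $\eta_k=kl$, $k=1,\dots,K$. At each $\eta_k$ I apply Theorem \ref{thm:P} with $\Card(B)=\Card(B_{j-1})=A^{j-1}$ and threshold $t=LA^{-j/2}$; here the hypothesis $L\le A^{j/2-1}$ is used precisely to force $At\le1$, hence $\tfrac43At\le\tfrac43$, collapsing the Bernstein denominator to $64+\tfrac43<66$ and making $\Card(B)At^2=L^2$, so the event $\Omega_k$ on which $|S(\eta_k)|\le LA^{-j/2}$ has complement of $\mu_j$-measure at most $2\exp(-L^2/66)$. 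Setting $\Gb_j=\bigcup_{k=1}^K\Omega_k$ and union-bounding over the $K$ net points gives
$$\mu_j\left(\Ab(M,A)^{A^{j-1}}\setminus\Gb_j\right)\le 2K\exp\left(-\frac{L^2}{66}\right)\le C\,\frac{A^{j/2}M^j}{L}\exp\left(-\frac{L^2}{C}\right),$$
which is the first claim, and on $\Gb_j$ one has $|S(\eta_k)|\le LA^{-j/2}$ simultaneously for all $k$.

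Next I would fill in the gaps with the mean value theorem. The derivative $S'(\eta)=A^{-(j-1)}\sum_{b}e^{-iM^j\eta b}\left(-iM^jbF_\eta(\Ac(b))+\partial_\eta F_\eta(\Ac(b))\right)$ is bounded by $CM^j$ uniformly, using $|F_\eta|\le2$ from \eqref{eq:Fmax}, $|\partial_\eta F_\eta|\le M$ (the digits lie in $\{0,\dots,M-1\}$), $0\le b\le1$, and $\Card(B_{j-1})=A^{j-1}$ — the same ingredients that bound $|G'(\eta)|$ above. By $2\pi$-periodicity the control at the $\eta_k$ propagates to the translated net $\{\eta_k+2\pi m\}$, which is $l$-dense in $\R$; hence for any $\eta$, choosing a net point $\eta'$ with $|\eta-\eta'|\le l$,
$$|S(\eta)|\le|S(\eta')|+l\sup|S'|\le LA^{-j/2}+\frac{\sqrt{2\pi}L}{A^{j/2}M^j}\cdot CM^j\le CLA^{-j/2}.$$
Multiplying by the prefactor bound gives $|G(\eta)|\le C\min\{1,|\eta|^{-1}\}LA^{-j/2}$ for all $\eta\neq0$, and rewriting $\eta=\xi/M^j$ finishes the proof.

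The step I expect to be the genuine constraint — rather than a technical difficulty — is the mesh trade-off: the oscillatory factor $e^{-iM^j\eta b}$ forces $S$ to vary on scale $M^{-j}$, so the net needs $K\sim A^{j/2}M^j/L$ points and the union bound costs a factor exponentially large in $j$; only the Gaussian tail $\exp(-L^2/C)$ coming from the concentration estimate of Theorem \ref{thm:P} can absorb it, which is exactly why the bound is recorded with the explicit $A^{j/2}M^j/L$ prefactor, to be defeated in later sections by letting $L$ grow with $j$. Everything else is bookkeeping of the displayed computations; the only non-routine points are the two arithmetic observations that $L\le A^{j/2-1}$ yields $At\le1$ and that $M^jb\in\Z$ on $B_{j-1}$.
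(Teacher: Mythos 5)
Your proof is correct and follows essentially the same route as the paper: same $2\pi$-periodicity observation, same mesh $l\le\sqrt{2\pi}L/(A^{j/2}M^j)$, same application of Theorem \ref{thm:P} with $t=LA^{-j/2}$ (with $L\le A^{j/2-1}$ forcing $At\le1$ so the Bernstein denominator collapses to $66$), same union bound over the $K$ net points, and a mean-value step to fill the mesh — you apply it to $S$ while the paper applies it to $G$ directly, but the resulting bound $|G(\eta)|\le C\min\{1,|\eta|^{-1}\}LA^{-j/2}$ is identical. One small point (present as a typo in \eqref{eq:Gj} of the paper as well, which you copied): $\Gb_j$ should be $\bigcap_{k=1}^K\Omega_k$ rather than $\bigcup_{k=1}^K\Omega_k$, since you need $|S(\eta_k)|\le LA^{-j/2}$ to hold at every net point simultaneously; the union bound is then applied to the complement.
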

Let
$$0<\ve\le\frac13.$$
For $j\ge3$, set 
$$L=L_j=M^{\frac{j\ve}{2}}.$$
Since $\frac{j\ve}{2}\le\frac j2-1$ for all $j\ge3$, we apply the proposition above. Then there is $\Gb_j\in\Ab(M,A)^{A^{j-1}}$ with
$$\mu_j\left(\Ab(M,A)^{A^{j-1}}\setminus\Gb_j\right)\le C\cdot\frac{A^{\frac j2}M^j}{L_j}\cdot\exp\left(-\frac{L_j^2}{C}\right)$$
such that for all $\Cc_j$ in $\Gb_j$, \eqref{eq:Fnu} holds. 

Set $\Gb_1=\Ab(M,A)$, $\Gb_2=\Ab(M,A)^A$, and $\Gb_j$ for $j\ge3$ be given in \eqref{eq:Gj}. Let
$$\Gb=\prod_{j=1}^\infty\Gb_j\subset\Ab^\infty=\prod_{j=1}^\infty\Ab(M,A)^{A^{j-1}}.$$
Recall that $A=M^\delta$ with $0<\delta<1$. Hence,
\begin{eqnarray*}
\mu^\infty\left(\Ab^\infty\setminus\Gb\right)&\le&\sum_{j=1}^\infty\mu_j\left(\Ab(M,A)^{A^{j-1}}\setminus\Gb_j\right)\\
&\le&\sum_{j=3}^\infty C\cdot\frac{A^{\frac j2}M^j}{L_j}\cdot\exp\left(-\frac{L_j^2}{C}\right)\\
&\le&\sum_{j=3}^\infty C\cdot M^{\frac{3j}{2}}\cdot\exp\left(-\frac{M^{j\ve}}{C}\right)\\
&\le&C\exp\left(-\frac{M^\ve}{C}\right)\sum_{j=3}^\infty M^{\frac{3j}{2}}\cdot\exp\left(-\frac{M^{(j-1)\ve}}{C}\right)\\
&\le&C_1\exp\left(-\frac{M^\ve}{C}\right).
\end{eqnarray*}
Here, $C>0$ is an absolute constant and $C_1=C_1(\ve)>0$ depends on $\ve$.

Now suppose that $\Cc$ is in $\Gb$, that is, $\Cc_j$ are in $\Gb_j$ for all $j\in\N$, see \eqref{eq:Ab}. Then
\begin{eqnarray*}
|\Fc\nu(\xi)|&\le&\left|\Fc\nu_1(\xi)\right|+\left|\Fc\nu_2(\xi)\right|+\sum_{j=2}^\infty\left|\Fc\nu_j(\xi)-\Fc\nu_{j-1}(\xi)\right|\\
&\le&C\cdot\min\left\{1,\frac{M}{|\xi|}\right\}+C\cdot\min\left\{1,\frac{M^2}{|\xi|}\right\}+\sum_{j=3}^\infty C\cdot\min\left\{1,\frac{M^j}{|\xi|}\right\}\cdot L_jA^{-\frac j2}.
\end{eqnarray*}
Divide the summation into the cases when $j<J$ and $j\ge J$, in which
$$J=\left\lfloor\frac{\log|\xi|}{M}\right\rfloor.$$
Assume further that 
$$0<\ve\le\min\left\{\frac{\delta}{2},\frac13\right\}.$$
Then
\begin{eqnarray*}
&&\sum_{j=3}^\infty C\cdot\min\left\{1,\frac{M^j}{|\xi|}\right\}\cdot L_jA^{-\frac j2}\\
&=&\sum_{j=3}^{J-1}C\cdot\min\left\{1,\frac{M^j}{|\xi|}\right\}\cdot L_jA^{-\frac j2}+\sum_{j=J}^\infty C\cdot\min\left\{1,\frac{M^j}{|\xi|}\right\}\cdot L_jA^{-\frac j2}\\
&\le&C\sum_{j=3}^{J-1}\frac{M^j}{|\xi|}\cdot M^{\frac{j\ve}{2}}A^{-\frac j2}+C\sum_{j=J}^\infty M^{\frac{j\ve}{2}}A^{-\frac j2}\\
&\le&C|\xi|^{-1}\sum_{j=3}^{J-1}M^{\left(1-\frac{\delta-\ve}{2}\right)j}+C\sum_{j=J}^\infty M^{-\frac{(\delta-\ve)j}{2}}\\
&\le&C|\xi|^{-1}\cdot\frac{M^{\left(1-\frac{\delta-\ve}{2}\right)J}}{M^{1-\frac{(\delta-\ve)}{2}}-1}+C\cdot\frac{M^{-\frac{(\delta-\ve)J}{2}}}{1-M^{-\frac{(\delta-\ve)}{2}}}\\
&\le&C\cdot\frac{M^{-\frac{(\delta-\ve)J}{2}}}{M^{\frac12}-1}+C\cdot\frac{M^{-
\frac{(\delta-\ve)J}{2}}}{1-M^{-\frac{\delta-\ve}{2}}}\\
&\le&\frac{C}{1-M^{-\frac\delta4}}\cdot|\xi|^{-\frac{\delta-\ve}{2}}\\
&\le&C|\xi|^{-\frac{\delta-\ve}{2}},
\end{eqnarray*}
for some absolute constant $C>0$, provided that $M^{-\frac\delta4}<\frac12$. This can be guaranteed by
$$M\ge e^{-4\delta^{-1}}.$$
In this case, if $|\xi|\ge M^4$, then
\begin{eqnarray*}
|\Fc\nu(\xi)|&\le&\left|\Fc\nu_1(\xi)\right|+\left|\Fc\nu_2(\xi)\right|+\sum_{j=2}^\infty\left|\Fc\nu_j(\xi)-\Fc\nu_{j-1}(\xi)\right|\\
&\le&CM^2|\xi|^{-1}+C|\xi|^{-\frac{\delta-\ve}{2}}\\
&\le&C|\xi|^{-\frac12}+C|\xi|^{-\frac{\delta-\ve}{2}}\\
&\le&C|\xi|^{-\frac{\delta-\ve}{2}},
\end{eqnarray*}
because $0<\delta<1$.

\begin{rmk}[Fourier decay estimates of $\nu_j$]
Notice that for each $j\in\N$,
$$\left|\Fc\nu_j(\xi)\right|\le\left|\Fc\nu_1(\xi)\right|+\left|\Fc\nu_2(\xi)\right|+\sum_{k=3}^\infty\left|\Fc\nu_k(\xi)-\Fc\nu_{k-1}(\xi)\right|.$$
The Fourier decay estimate of the Cantor measure $\nu$ in Theorem \ref{thm:FD} also applies to the Cantor measure $\nu_j$ at each iteration $j\in\N$. 

Recall that the density function $\rho_j$ of the Cantor measure $\nu_j$ is given by 
$$\rho_j(x)=\frac{M^j}{A^j}\Char_{\Cc_j}(x)\quad\text{for }x\in\R.$$
It then follows that
$$\Fc\Char_{\Cc_j}(\xi)=\frac{A^j}{M^j}\Fc\nu_j(\xi)\quad\text{for all }\xi\in\R.$$
Hence, with the same conditions as Theorem \ref{thm:FD},
\begin{equation}\label{eq:FDj}
\left|\Fc\Char_{\Cc_j}(\xi)\right|\le C\frac{A^j}{M^j}|\xi|^{-\frac{\delta-\ve}{2}}\quad\text{for all }j\in\N\text{ and }|\xi|\ge M^4.
\end{equation}
\end{rmk}

\section{From Fourier decay to the FUP}\label{sec:FUP}
In this section, we prove the FUP in Theorems \ref{thm:FUPL} and \ref{thm:FUPC}, following the approach to the FUP by the Fourier decay suggested by Dyatlov \cite[Section V]{Dy}. See also Bourgain-Dyatlov \cite[Section 4]{BD1}, in which they used the (generalized) Fourier decay estimate of the Patterson-Sullivan measures to establish an FUP for the related fractal sets (that is, the limit sets of Schottky groups). 

We first prove the FUP in Theorems \ref{thm:FUPC} for the random Cantor measures $\nu$:
$$\|T\|_{L^2_\nu(\R)\to L^2_\nu(\R)}\le Ch^{\frac\delta4-\ve},$$
in which
$$Tu(\xi)=\int_{\R}e^{-\frac{ix\cdot \xi}{h}}u(x)\,d\nu(x)\quad\text{for }u\in L^2_\nu(\R).$$
Here, the Cantor set $\Cc$ is chosen from $\Gb\subset\Ab^\infty$ in Theorem \ref{thm:FD} so that the corresponding Cantor measure $\nu$ satisfies the Fourier decay estimate:
$$|\Fc\nu(\xi)|\le C|\xi|^{-\frac{\delta-\ve}{2}}\quad\text{for all }|\xi|\ge M^4.$$
Notice that
$$\|T\|_{L^2_\nu(\R)\to L^2_\nu(\R)}=\left\|T^\star T\right\|_{L^2_\nu(\R)\to L^2_\nu(\R)},$$
in which $T^\star T$ is an integral operator with kernel
$$\Kc_\nu(\xi,\eta)=\int_\R e^{\frac{i(\xi-\eta)x}{h}}\,d\nu(x)=\sqrt{2\pi}\cdot\Fc\nu\left(\frac{\eta-\xi}{h}\right).$$
The Fourier decay estimate of $\nu$ in Theorem \ref{thm:FD} implies that if $|\frac{\eta-\xi}{h}|\ge M^4$, then
$$\left|\Kc_\nu(\xi,\eta)\right|=\sqrt{2\pi}\cdot\left|\Fc\nu\left(\frac{\eta-\xi}{h}\right)\right|\le C\left|\frac{\eta-\xi}{h}\right|^{-\frac{\delta-\ve}{2}}=Ch^{\frac{\delta-\ve}{2}}|\eta-\xi|^{-\frac{\delta-\ve}{2}}.$$
Here, $C>0$ is an absolute constant. Whereas if $|\frac{\eta-\xi}{h}|\le M^4$, then the trivial estimate holds:
$$\left|\Kc_\nu(\xi,\eta)\right|=\sqrt{2\pi}\cdot\left|\Fc\nu\left(\frac{\eta-\xi}{h}\right)\right|\le C.$$
By Schur's test,
\begin{eqnarray*}
&&\left\|T^\star T\right\|_{L^2_\nu(\R)\to L^2_\nu(\R)}\\
&\le&\sqrt{\sup_{\xi\in\R}\int_{\R}\left|\Kc_\nu(\xi,\eta)\right|\,d\nu(\eta)\cdot\sup_{\eta\in\R}\int_{\R}\left|\Kc_\nu(\xi,\eta)\right|\,d\nu(\xi)}\\
&=&\sup_{\xi\in\R}\int_\R\left|\Kc_\nu(\xi,\eta)\right|\,d\nu(\eta).
\end{eqnarray*}
Since $\nu$ is supported on $[0,1]$, divide the integral to the ones on dyadic intervals of the form
$$\left[\xi+2^{-j},\xi+2^{-j+1}\right]\quad\text{with }j=1,...,J,$$
in which $J$ is the largest integer such that $2^{-J}\ge M^4h$. Then
$$\left\lfloor\frac{\left|\log\left(M^4h\right)\right|}{\log2}\right\rfloor\le J\le\left\lfloor\frac{\left|\log\left(M^4h\right)\right|}{\log2}\right\rfloor+1.$$
For each $I\subset\R$, $\nu(I)\le C|I|^\delta$. Hence,
\begin{eqnarray*}
&&\int_\R\left|\Kc_\nu(\xi,\eta)\right|\,d\nu(\eta)\\
&\le&\int_{\left[\xi,\xi+M^4h\right]}\left|\Kc_\nu(\xi,\eta)\right|\,d\nu(\eta)+\sum_{j=1}^J\int_{\left[\xi+2^{-j},\xi+2^{-j+1}\right]}\left|\Kc_\nu(\xi,\eta)\right|\,d\nu(\eta)\\
&\le&C\left[\nu\left(\left[\xi,\xi+M^4h\right]\right)+\sum_{j=1}^Jh^{\frac{\delta-\ve}{2}}\left(2^{-j}\right)^{-\frac{\delta-\ve}{2}}\cdot\nu\left(\left[\xi+2^{-j},\xi+2^{-j+1}\right]\right)\right]\\
&\le&C\left[\left(M^4h\right)^\delta+\sum_{j=1}^Jh^{\frac{\delta-\ve}{2}}\left(2^{-j}\right)^{-\frac{\delta-\ve}{2}}\cdot\left(2^{-j}\right)^\delta\right]\\
&\le&Ch^{\frac{\delta-\ve}{2}}|\log h|\\
&\le&Ch^{\frac\delta2-2\ve},
\end{eqnarray*}
provided that $h\le M^{-8}$ (so $(M^4h)^\delta\le h^\frac\delta2\le h^{\frac{\delta-\ve}{2}}$).

We next prove the FUP in Theorems \ref{thm:FUPC} for the $h$-neighborhood $\Cc(h)$ of the random Cantor set $\Cc$:
$$\left\|\Char_{\Cc(h)}\Fc_h\Char_{\Cc(h)}\right\|_{L^2(\R)\to L^2(\R)}=\left\|\Fc_h\right\|_{L^2(\Cc(h))\to L^2(\Cc(h))}=\left\|\Fc_h^\star\Char_{\Cc(h)}\Fc_h\right\|_{L^2(\Cc(h))\to L^2(\Cc(h))},$$
in which $\Fc_h^\star\Char_{\Cc(h)}\Fc_h$ is an integral operator with kernel
$$\Kc_L(\xi,\eta)=\frac{1}{2\pi h}\int_{\Cc(h)}e^{\frac{i(\xi-\eta)x}{h}}\,dx=\frac{1}{\sqrt{2\pi}h}\Fc\Char_{\Cc(h)}\left(\frac{\eta-\xi}{h}\right).$$
Here, the Cantor set $\Cc$ is chosen from $\Gb\subset\Ab^\infty$ so that the corresponding Cantor measure $\nu$ satisfies the Fourier decay estimate in Theorem \ref{thm:FD}. By \eqref{eq:FDj}, for the Cantor set $\Cc_j$ at each iteration $j\in\N$,
$$\left|\Fc\Char_{\Cc_j}(\xi)\right|\le C\frac{A^j}{M^j}|\xi|^{-\frac{\delta-\ve}{2}}\quad\text{for all }|\xi|\ge M^4.$$
Recall that $\Cc_j$ at the $j$-th iteration is a union of $A^j$ intervals of equal length $M^{-j}$. Let $J$ is the largest integral such that $M^{-J}\ge h$. Then
$$\left\lfloor\frac{|\log h|}{\log M}\right\rfloor\le J\le\left\lfloor\frac{|\log h|}{\log M}\right\rfloor+1.$$
We compare the Fourier transform of $\Char_{\Cc(h)}$ and of $\Char_{\Cc_J}$. Notice that
$$\left|\Cc(h)\triangle\Cc_j\right|=\left|\Cc(h)\setminus\Cc_J\right|+\left|\Cc_J\setminus\Cc(h)\right|\le CA^Jh\le CM^{\delta J}h\le Ch^{1-\delta}.$$
Then for all $\eta\in\R$,
$$\left|\Fc\Char_{\Cc(h)}(\xi)-\Fc\Char_{\Cc_J}(\xi)\right|\le C\cdot\left|\Cc(h)\triangle\Cc_j\right|\le Ch^{1-\delta}.$$
Therefore, for all $|\xi|\ge M^4$,
\begin{eqnarray*}
\left|\Fc\Char_{\Cc(h)}(\xi)\right|&\le&\left|\Fc\Char_{\Cc_j}(\xi)\right|+\left|\Fc\Char_{\Cc(h)}(\xi)-\Fc\Char_{\Cc_J}(\xi)\right|\\
&\le&Ch^{1-\delta}|\xi|^{-\frac{\delta-\ve}{2}}+Ch^{1-\delta}\\
&\le&Ch^{1-\delta}|\xi|^{-\frac{\delta-\ve}{2}}
\end{eqnarray*}
As a consequence, if $|\eta-\xi|\ge M^4h$, then
$$\left|\Kc_L(\xi,\eta)\right|=\frac{1}{\sqrt{2\pi}h}\left|\Fc\Char_{\Cc(h)}\left(\frac{\eta-\xi}{h}\right)\right|\le Ch^{-\frac{\delta+\ve}{2}}|\eta-\xi|^{-\frac{\delta-\ve}{2}}.$$
Whereas if $|\frac{\eta-\xi}{h}|\le M^4$, then the trivial estimate holds:
$$\left|\Kc_L(\xi,\eta)\right|=\frac{1}{\sqrt{2\pi}h}\cdot\left|\Fc\Char_{\Cc(h)}\left(\frac{\eta-\xi}{h}\right)\right|\le Ch^{-\delta}.$$
Here, we used the fact that $|\Cc(h)|\le Ch^{1-\delta}$.

With this finite scale version of the Fourier decay estimate, we repeat the same process as above: By Schur's test,
\begin{eqnarray*}
&&\left\|\Fc_h^\star\Fc_h\right\|_{L^2(\Cc(h))\to L^2(\Cc(h))}\\
&\le&\sqrt{\sup_{\xi\in\Cc(h)}\int_{\Cc(h)}\left|\Kc_L(\xi,\eta)\right|\,d\eta\cdot\sup_{\eta\in\Cc(h)}\int_{\Cc(h)}\left|\Kc_L(\xi,\eta)\right|\,d\xi}\\
&=&\sup_{\xi\in\Cc(h)}\int_{\Cc(h)}\left|\Kc_L(\xi,\eta)\right|\,d\eta.
\end{eqnarray*}
Divide the integral to the ones on dyadic intervals of the form
$$\left[\xi+2^{-j},\xi+2^{-j+1}\right]\quad\text{with }j=1,...,J,$$
in which $J$ is the largest integral such that $2^{-J}\ge M^4h$. Then
$$\left\lfloor\frac{\log\left(M^4h\right)}{\log2}\right\rfloor\le J\le\left\lfloor\frac{\log\left(M^4h\right)}{\log2}\right\rfloor+1.$$
For each dyadic interval $I$ above, $|\Cc(h)\cap I|\le Ch^{1-\delta}|I|^\delta$. Hence,
\begin{eqnarray*}
&&\int_{\Cc(h)}\left|\Kc_h(\xi,\eta)\right|\,d\eta\\
&\le&\int_{\Cc(h)\cap\left[\xi,\xi+M^4h\right]}\left|\Kc_h(\xi,\eta)\right|\,d\eta+\sum_{j=1}^J\int_{\Cc(h)\cap\left[\xi+2^{-j},\xi+2^{-j+1}\right]}\left|\Kc_h(\xi,\eta)\right|\,d\eta\\
&\le&Ch^{1-\delta}\left[\left(M^4h\right)^\delta\cdot h^{-\delta}+\sum_{j=1}^J
\left(2^{-j}\right)^\delta\cdot h^{-\frac{\delta+\ve}{2}}\left(2^{-j}\right)^{-\frac{\delta-\ve}{2}}\right]\\
&\le&Ch^{1-\frac{3\delta}{2}-\frac{\ve}{2}}|\log h|\\
&\le&Ch^{1-\frac{3\delta}{2}-2\ve},
\end{eqnarray*}
provided that $h\le M^{-8}$ (so $M^{4\delta}\le h^{-\frac\delta2}\le h^{-\frac{\delta+\ve}{2}}$).


\end{document}